\theoremstyle{plain}
\newtheorem{thm}{Theorem}[section]
\newtheorem{prop}[thm]{Proposition}
\newtheorem{lem}[thm]{Lemma}
\newtheorem{cor}[thm]{Corollary}
\theoremstyle{definition}
\newtheorem{question}[thm]{Question}
\theoremstyle{remark}
\newtheorem{rem}[thm]{Remark}
\DeclareMathOperator{\Aut}{Aut}
\DeclareMathOperator{\End}{End}
\DeclareMathOperator{\id}{id}
\DeclareMathOperator{\Pic}{Pic}
\newcommand{\bC}{\mathbb{C}}
\newcommand{\bF}{\mathbb{F}}
\newcommand{\bN}{\mathbb{N}}
\newcommand{\bP}{\mathbb{P}}
\newcommand{\bQ}{\mathbb{Q}}
\newcommand{\bZ}{\mathbb{Z}}
\newcommand{\sE}{\mathcal{E}}
\newcommand{\sH}{\mathcal{H}}
\newcommand{\sI}{\mathcal{I}}
\newcommand{\sL}{\mathcal{L}}
\newcommand{\sO}{\mathcal{O}}
\newcommand{\sW}{\mathcal{W}}
\newcommand{\sX}{\mathcal{X}}
\newcommand{\sY}{\mathcal{Y}}
\newcommand{\sZ}{\mathcal{Z}}
\begin{document}

\title[Deformations of almost homogeneous projective bundles]{Global deformations of certain rational almost homogeneous projective bundles}

\author{Florian Schrack}
\address{Mathematisches Institut\\ Universität Bayreuth\\ 95440 Bayreuth\\ Germany}
\email{florian.schrack@uni-bayreuth.de}



\begin{abstract}
  We study global deformations of certain projective bundles over projective spaces. We show that any projective global deformation of a projective bundle over~$\bP^1$ carries the structure of a projective bundle over some projective space. Furthermore, we construct examples in arbitrary dimension $\ge 3$ of almost homogeneous Fano projective bundles over~$\bP^2$ which can be globally deformed to non-almost homogeneous manifolds.
\end{abstract}

\maketitle
\section{Introduction}
We let $\Delta \subset \bC$ be the unit disk and consider families $(X_t)_{t\in\Delta}$ of compact complex manifolds, i.e., proper submersions
$p\colon \sX \to \Delta$, where $\sX$ is a complex manifold and $X_t := p^{-1}(t)$ for $t \in \Delta$. In this setup, we call $X_0$ a \emph{(global) deformation} of~$X_t$.

A classical problem is to study the case where for all $t \ne 0$, $X_t$ is isomorphic to some rational homogeneous manifold $S = G/P$, where $G$ is a semisimple Lie group and $P < G$ a parabolic subgroup. The question then is whether also $X_0$ is rational homogeneous (and thus $X_0 \cong S$ by local rigidity of rational homogeneous manifolds, cf.~\cite{Bot57}).

If one assumes $b_2(S) = 1$, Hwang and Mok showed in a series of papers (see~\cite[§3.3]{Mok16} for references and an outline of the proof) that this is true if $S$ is not isomorphic to the 7-dimensional Fano contact manifold $\bF^5$, which has been shown by Pasquier--Perrin in~\cite{PP10} to admit a deformation to a non-homogeneous horospherical variety.

If $b_2(S) > 1$, it is obviously no longer true that $X_0 \cong S$, as can already be seen for $S = \bP^1 \times \bP^1$, which can be deformed to the Hirzebruch surface $\bF_k$ for arbitrary even~$k$.

So we see that $X_0$ need not necessarily be homogeneous. In the examples cited above, however, $X_0$ is still \emph{almost} homogeneous, i.e., $\Aut X_0$ acts on~$X_0$ with an open orbit (this is equivalent to $T_{X_0}$ being \emph{generically} globally generated). We can therefore ask:
\begin{question}\label{qu:homdef}
  Let $(X_t)_{t\in\Delta}$ be a family of compact complex manifolds where $X_t$ is rational homogeneous for $t \ne 0$. Is then $X_0$ almost homogeneous?
\end{question}
We might even ask the following stronger question:
\begin{question}\label{qu:almhomdef}
  Let $(X_t)_{t\in\Delta}$ be a family of compact complex manifolds where $X_t$ is an almost homogeneous manifold for every $t \ne 0$. Is then $X_0$ almost homogeneous?
\end{question}

The purpose of the present article is to give negative answers to both questions: We show that the homogeneous variety~$\bP(T_{\bP^2})$ can be deformed to a non-almost homogeneous manifold. This answers Question~\ref{qu:homdef} negatively for $\dim X_t = 3$ and $b_2(X_t) = 2$. Taking products with projective spaces, one also obtains negative answers for $\dim X_t > 3$ and $b_2(X_t) \ge 3$.

Moreover, for any $n \ge 3$, we construct almost homogeneous $n$-di\-men\-sion\-al projective bundles over~$\bP^2$ which are Fano and can be deformed to non-almost homogeneous manifolds.  These examples give negative answers to Question~\ref{qu:almhomdef} for $\dim X_t \ge 3$ and $b_2(X_t) = 2$.

In order to give some context to the above-mentioned examples, we begin our discussion by reviewing the theory of almost homogeneous compact complex surfaces in the context of global deformations in section~\ref{sec:surf}. We furthermore study projective global deformations of projective bundles over~$\bP^1$ in section~\ref{sec:prbundp1}, where we show that such global deformations again carry the structure of a projective bundle and in particular are almost homogeneous in most cases. This is inspired by work of Brieskorn \cite{Bri65}.

The remanining sections are then devoted to the construction of the examples announced above: In section~\ref{sec:almhomprojbun}, we give some criteria for almost homogeneity of projective bundles. Finally, in section~\ref{sec:degen}, we give the construction of the above-mentioned (almost) homogeneous Fano projective bundles over~$\bP^2$ which admit global deformations to non-almost homogeneous manifolds.

Throughout the article, complex manifolds are assumed to be \emph{connected}. We use the notation~``$\bP(E)$'' to denote the projective bundle of \emph{hyperplanes} in the fibers of a given vector bundle~$E$.

\subsection*{Acknowledgments}
I would like to thank Thomas Peternell for bringing to my attention the questions which inspired this article and for numerous useful discussions.

\section{The surface case}\label{sec:surf}
Almost homogeneous compact complex surfaces have been classified by Potters:
\begin{thm}[\cite{Pot69}]\label{thm:potters}
  Let $S$ be a compact complex surface. If $S$ is almost homogeneous, then one of the following holds:
\begin{enumerate}[(i)]
\item
  $S$ is obtained from $\bP^2$ or from a Hirzebruch surface by blowing up a finite number of points,
\item\label{it:ell}
  $S \cong \bP(V)$, where $V$ is a rank-$2$ bundle over an elliptic curve which is either a direct sum of $\sO$ and a topologically trivial line bundle or a non-split extension of $\sO$ with $\sO$,
\item\label{it:hopf}
  $S$ is a Hopf surface with abelian fundamental group,
\item\label{it:tor}
  $S$ is a two-dimensional complex torus.
\end{enumerate}
Conversely, $\bP^2$, the Hirzebruch surfaces and the surfaces of type (\ref{it:ell}), (\ref{it:hopf}), (\ref{it:tor}) are almost homogeneous.
\end{thm}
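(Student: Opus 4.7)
My plan is to proceed through the Enriques--Kodaira classification of compact complex surfaces, ruling out in each class the surfaces that are not almost homogeneous and analysing what remains. A general first observation is that almost homogeneity implies that $T_S$ is generically globally generated, so in particular $h^0(S, T_S) > 0$; this criterion alone already eliminates large parts of the classification.

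The classes with $H^0(S, T_S) = 0$ or with $\Aut(S)$ finite can be disposed of directly: surfaces of general type have finite automorphism group by a classical theorem of Matsumura, while K3 and Enriques surfaces satisfy $H^0(S, T_S) = 0$ (for K3 one uses $T_S \cong \Omega_S^1$ via $K_S \cong \sO_S$ together with $h^{1,0} = 0$; the Enriques case is reduced to K3 via the unramified double cover). For properly elliptic surfaces ($\kappa(S) = 1$) the canonical elliptic fibration is preserved by $\Aut(S)^0$, and the presence of singular fibres prevents the induced action on the base from having an open orbit, hence the same holds for $S$. Among Kodaira dimension zero Kähler surfaces, the only remaining cases beyond tori are bielliptic surfaces, which fibre over an elliptic base with every automorphism respecting the fibration, giving only one-dimensional orbits; this leaves case~(iv).

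The Kähler case $\kappa(S) = -\infty$ requires more work: $S$ is rational or birationally ruled over a curve $C$, and since $\Aut(S)^0$ descends to an action on $C$, we must have $g(C) \le 1$. If $g(C) = 0$, the minimal model of $S$ is $\bP^2$ or a Hirzebruch surface and we obtain case~(i). If $g(C) = 1$, we write $S = \bP(V)$ with $V$ a rank-two bundle on an elliptic curve~$E$; combining Atiyah's classification of such bundles with a direct computation of the automorphism group of $\bP(V)$ singles out exactly the two families listed in~(ii) as those with enough fibrewise symmetry to produce a two-dimensional orbit in addition to the translations of $E$.

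The most delicate step, and the one I expect to be the main technical obstacle, is the non-Kähler case: one has to show that among Kodaira's class VII surfaces, only Hopf surfaces $S = (\bC^2 \setminus \{0\})/\Gamma$ with $\Gamma$ abelian are almost homogeneous. Non-Hopf class VII surfaces (for instance Inoue surfaces and their relatives) can be excluded via the rigidity of their underlying affine or solvable structures, which forces their automorphism groups to be too small. Conversely, for a Hopf surface the centraliser of $\Gamma$ in $GL_2(\bC)$ descends to $S$ and acts with an open orbit precisely when $\Gamma$ is abelian, yielding case~(iii). The converse direction of the theorem is then a matter of exhibiting explicit actions with open orbit: $\mathrm{PGL}_3$ on $\bP^2$ and the natural $(\bC^*)^2$-action on Hirzebruch surfaces; translations of $E$ combined with fibrewise scalings for the bundles in~(ii); the descended centraliser action in~(iii); and translations on tori in~(iv).
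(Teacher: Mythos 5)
The paper itself offers no proof of this statement---it is quoted verbatim from Potters \cite{Pot69}---so your sketch has to stand on its own. Its overall strategy (run through the Enriques--Kodaira classification and test each class against the existence of an at least two-dimensional automorphism group) is reasonable, but it has two genuine holes. First, your case division omits the non-K\"ahler surfaces of Kodaira dimension zero: you discuss ``Kodaira dimension zero K\"ahler surfaces'' and then jump to class VII, but class VII by definition has $\kappa=-\infty$ and $b_1=1$, whereas primary and secondary Kodaira surfaces are non-K\"ahler with $\kappa=0$ and do not appear in Potters' list, so they must be explicitly ruled out (for a primary Kodaira surface one has to show that every global vector field is tangent to the elliptic fibration, e.g.\ by analyzing the non-split extension $0\to\sO_S\to T_S\to\sO_S\to 0$ coming from the principal bundle structure; the secondary case then follows by passing to the cover). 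Second, and more seriously, your treatment of class VII presupposes a complete classification of such surfaces (``non-Hopf class VII surfaces, for instance Inoue surfaces and their relatives, can be excluded\dots''). No such classification exists: minimal class VII surfaces with $b_2>0$ are classified only conjecturally (the global spherical shell conjecture), and even the $b_2=0$ dichotomy into Hopf and Inoue surfaces (Bogomolov, Li--Yau--Zheng, Teleman) was unavailable in 1969. So ``rule out everything else on the list'' is not a usable strategy here; one needs a direct argument that an almost homogeneous surface with $b_1=1$ is a Hopf surface with abelian fundamental group, for instance by studying the open orbit $G/H$ of the at least two-dimensional group $G=\Aut^0(S)$ and the structure of its complement. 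This is in fact how Potters organizes his proof---around two-dimensional complex transformation groups rather than around the Enriques--Kodaira table---precisely because the non-K\"ahler part of that table was (and in part still is) incomplete.

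A smaller issue: in the properly elliptic case your stated reason (``the presence of singular fibres'') does not cover smooth isotrivial elliptic fibrations over curves of genus at least two, which also have $\kappa=1$ and no singular fibres. The correct argument is that for $\kappa(S)=1$ the base of the canonical fibration is a hyperbolic orbifold, so the induced action of $\Aut^0(S)$ on the base is trivial and all orbits are at most one-dimensional. The K\"ahler $\kappa=-\infty$ analysis and the list of explicit actions in the converse direction are fine as sketched.
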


It is classically known that any deformation of of a Hirzebruch surface is again a Hirzebruch surface (cf.~\cite{Bri65}). We will investigate projective bundles over~$\bP^1$ in arbitrary dimension in section~\ref{sec:prbundp1}. Global deformations of such bundles will be characterized in Theorem~\ref{thm:prbundp1}.

For ruled surfaces over elliptic curves, global deformations fail to be almost homogeneous in general, as the following example shows (cf.~\cite{PS14}): Let $\sE$ be a rank-$2$ vector bundle over $\bP^1 \times \Delta$ such that $E_t := \sE|_{\bP^1\times\{t\}} \cong \sO_{\bP^1}^{\oplus 2}$ for $t \ne 0$ and $E_0 \cong \sO_{\bP^1}(-k) \oplus \sO_{\bP^1}(k)$ for some $k > 0$. Let $\eta\colon A \to \bP^1$ be a 2-sheeted cover from an elliptic curve~$A$, and let $\sX := \bP((\eta\times\id)^*\sE)$. Then $\sX$ is a family of compact complex surfaces over~$\Delta$ with $X_t \cong \bP^1 \times A$ for $t\ne 0$ and $X_0$ is a ruled surface over~$A$ which is not almost homogeneous by Theorem~\ref{thm:potters}.

For the other cases in Theorem~\ref{thm:potters}, we cite the following classical results by Kodaira and Kodaira--Spencer:
\begin{prop}[{\cite[Thm.~36]{Kod66}}]
  Let $S$ be a Hopf surface. Then any deformation of $S$ is also a Hopf surface.
\end{prop}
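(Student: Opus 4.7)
The plan is to combine topological rigidity of the family with Kodaira's classification of surfaces, and then to single out the Hopf case using fundamental-group data.

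Since $p \colon \sX \to \Delta$ is a proper submersion, Ehresmann's theorem gives a $C^\infty$-trivialisation, so $b_1(X_0) = 1$, $b_2(X_0) = 0$, and $\pi_1(X_0) \cong \pi_1(X_t)$ for $t \ne 0$ — the latter being the virtually cyclic fundamental group of a Hopf surface. Because $b_2(X_0) = 0$ forbids $(-1)$-curves, $X_0$ is minimal, and Kodaira's classification of surfaces places it in class $\mathrm{VII}_0$. Appealing to the classification of $\mathrm{VII}_0$-surfaces with $b_2 = 0$ (Bogomolov--Li--Yau--Zuo--Teleman), such a surface is either Hopf or Inoue; since the fundamental group of an Inoue surface is not virtually cyclic, the constraint on $\pi_1(X_0)$ forces $X_0$ to be a Hopf surface.

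The main obstacle is that this classification of $\mathrm{VII}_0$-surfaces with $b_2 = 0$ relies on deep analytic techniques unavailable when \cite{Kod66} appeared; to give a self-contained proof in the spirit of Kodaira, I would argue directly. After possibly passing to a finite étale base change of the family to reduce to the primary case, I would try to arrange the universal covering maps $\bC^2 \setminus \{0\} \to X_t$ into a holomorphic family over $\Delta \setminus \{0\}$ in which the generator of the deck group is a contraction $\phi_t$ in Kodaira normal form. The crucial analytic step — and the hardest part — is to show that $\phi_t$ extends holomorphically across $t = 0$ to a contraction of $(\bC^2, 0)$, which would exhibit $X_0$ as $(\bC^2 \setminus \{0\})/\langle \phi_0 \rangle$. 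For this one can exploit upper semicontinuity of $h^0(T_{X_t})$, which produces a non-trivial holomorphic vector field on $X_0$ lifting the canonical $\bC^*$-action present on every Hopf surface; integrating this vector field should give the desired contraction structure in the limit.
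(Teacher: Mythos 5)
The paper gives no proof of this proposition at all: it is quoted directly from Kodaira \cite{Kod66}, so there is no argument of the author's to measure yours against. Judged on its own terms, your first argument is close to a correct (if anachronistic) proof, but it has one genuine gap: from $b_1(X_0)=1$, $b_2(X_0)=0$ and minimality you conclude that $X_0$ lies in class $\mathrm{VII}_0$. That implication is false as stated, because class $\mathrm{VII}$ is defined by $b_1=1$ \emph{together with} Kodaira dimension $-\infty$, and secondary Kodaira surfaces are minimal surfaces with $b_1=1$, $b_2=0$ and Kodaira dimension $0$. You must exclude them explicitly, e.g.\ by the same fundamental-group argument you deploy against Inoue surfaces (a secondary Kodaira surface is a finite free quotient of a nilmanifold, so its $\pi_1$ is virtually nilpotent of Hirsch length $4$, hence not virtually cyclic), or by deformation invariance of plurigenera for surfaces. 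With that patch the first route does close, although it rests on the Bogomolov--Li--Yau--Zuo--Teleman classification of class $\mathrm{VII}_0$ surfaces with $b_2=0$, a theorem far deeper and much later than the cited source; if you want something closer in spirit to Kodaira, the natural replacement is his own criterion that a class $\mathrm{VII}_0$ surface with $b_2=0$ containing a curve or a nontrivial holomorphic vector field is a Hopf surface, combined with semicontinuity of $h^0(T_{X_t})$ or a limiting-curve argument --- but then the verification that such a curve or vector field survives to $t=0$ has to be carried out, not merely asserted.

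Your second paragraph is not a proof. The holomorphic extension of the contractions $\phi_t$ across $t=0$ and the integration of a limiting vector field into a contraction of $(\bC^2,0)$ are precisely the hard analytic content of Kodaira's theorem, and you signal rather than supply them (``should give'', ``I would try to''). As submitted, the proposal stands or falls with the first route, and that route needs the class $\mathrm{VII}_0$ step repaired as above.
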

\begin{prop}[{\cite[Thm.~20.2]{KS58}}]\label{prop:deftorus}
  Let $S$ be a two-dimensional complex torus. Then any deformation of $S$ is also a complex torus.
\end{prop}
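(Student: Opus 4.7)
The plan is to build a relative Albanese morphism $\alpha\colon \sX \to \mathcal{A}$ over $\Delta$ and conclude by checking that its restriction $\alpha_0$ to the central fiber $S = X_0$ is an isomorphism. Since $S$ is K\"ahler, the Kodaira--Spencer theorem on stability of the K\"ahler property lets me shrink $\Delta$ so that every $X_t$ is K\"ahler. In a smooth family of compact K\"ahler manifolds the Hodge numbers $h^{p,q}(X_t)$ are locally constant (they are upper-semicontinuous and sum to the Betti numbers, which are constant by Ehresmann), so in particular $h^{1,0}(X_t) = h^{1,0}(S) = 2$ for every $t$.

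Consequently $p_*\Omega^1_{\sX/\Delta}$ is locally free of rank $2$ by Grauert, and the local system $R^1 p_*\bZ$ has rank $4$ and is trivial on the contractible disk $\Delta$. Integration embeds the dual local system $R_1 p_*\bZ$ holomorphically in the dual bundle $\sH := (p_*\Omega^1_{\sX/\Delta})^\vee$ as a family of lattices $\Lambda$, and the quotient $\mathcal{A} := \sH/\Lambda$ is a holomorphic family of complex $2$-tori over $\Delta$. A choice of base point in $\sX$ then produces a relative Albanese morphism $\alpha\colon \sX \to \mathcal{A}$ whose fibers $\alpha_t$ are the classical Albanese maps of the $X_t$; for $t = 0$, the map $\alpha_0\colon S \to \mathcal{A}_0$ is the Albanese map of a $2$-torus, hence an isomorphism.

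Both $p$ and $\mathcal{A} \to \Delta$ are proper and smooth with $2$-dimensional fibers, so the locus on which $\alpha_t$ is an isomorphism is open in $\Delta$; hence $X_t \cong \mathcal{A}_t$ is a complex torus for all $t$ in some neighborhood of $0$, and the statement follows. The main obstacle is the holomorphic construction of $\mathcal{A}$ and $\alpha$: one needs the period lattice $R_1 p_*\bZ$ to sit as a holomorphic family of discrete subgroups inside $\sH$, which is a standard but nontrivial consequence of Hodge theory in families. Once $\alpha$ is in place, the conclusion reduces to the openness of the isomorphism locus for proper morphisms between smooth spaces over the same base.
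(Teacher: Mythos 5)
The paper offers no proof of this proposition; it is quoted directly from Kodaira--Spencer. Measured against what the statement actually asserts, your proposal has a genuine gap: you have proved the converse direction. Recall the convention fixed in the introduction: for a family $p\colon \sX \to \Delta$ it is the \emph{central} fiber $X_0$ that is called a deformation of $X_t$. The proposition therefore says: if $X_t$ is a two-dimensional complex torus for all $t \ne 0$, then the limit $X_0$ is again a complex torus. Your argument instead takes $S = X_0$ to be the torus and concludes that the nearby fibers $X_t$ are tori; in effect you show that $\{t \in \Delta : X_t \text{ is a torus}\}$ is \emph{open}, which gives no information about the central fiber of a family whose general fibers are tori --- the content needed here is the closedness of that set. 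Two of your key steps are exactly the ones that break in the correct direction: openness of the K\"ahler property says nothing about $X_0$ when only the $X_t$, $t \ne 0$, are known to be K\"ahler (for surfaces one argues instead that $b_1(X_0) = b_1(X_t) = 4$ is even, hence $X_0$ is K\"ahler), and ``the isomorphism locus of $\alpha$ is open in $\Delta$'' cannot be invoked at $t = 0$ because one does not yet know that $\alpha_0$ is an isomorphism.

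The relative Albanese is nevertheless the right tool for the correct direction (it is essentially the route taken by Catanese in arbitrary dimension, cited in the Remark following the proposition). Once one knows that $X_0$ is K\"ahler with $h^{1,0}(X_0) = 2$, the relative Albanese map exists over all of $\Delta$, and $\alpha_0 \colon X_0 \to \mathcal{A}_0$ is a surjective morphism whose degree, being locally constant in $t$, equals $1$; it is thus a bimeromorphic morphism onto a two-dimensional torus, and since a torus contains no rational curves, $\alpha_0$ contracts no curve and is an isomorphism. None of these steps appears in your write-up, so as it stands the proposal does not prove the stated proposition.
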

\begin{rem}
  Catanese showed in \cite[Thm.~2.1]{Cat04} that Proposition~\ref{prop:deftorus} is true in any dimension.
\end{rem}

\section{Projective bundles over~$\bP^1$}\label{sec:prbundp1}
In this section we investigate families whose general fiber is isomorphic to a $\bP^r$-bundle over~$\bP^1$ (see Corollary~\ref{cor:prbundp1} for a proof that any such bundle is almost homogeneous).

We first study extremal contractions of projective bundles over~$\bP^1$:
\begin{prop}\label{prop:prbundcontr}
  For given natural numbers $0 \le a_1 \le \dotsb \le a_{r}$ consider the vector bundle $E := \sO_{\bP^1} \oplus \sO_{\bP^1}(a_1) \oplus \dotsb \oplus \sO_{\bP^1}(a_r)$ over $\bP^1$ and let $\pi\colon X := \bP(E) \to \bP^1$ be the associated $\bP^r$-bundle over~$\bP^1$. Let $\varphi\colon X \to Y$ be the contraction of a $K_X$-negative extremal ray of~$\overline{NE}(X)$. Assume $\varphi \ne \pi$. Then either $E \cong \sO_{\bP^1}^{\oplus (r+1)}$ or $E \cong \sO_{\bP^1}^{\oplus r}\oplus \sO_{\bP^1}(1)$.
\end{prop}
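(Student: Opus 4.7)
My plan rests on the observation that $X = \bP(E)$ has Picard number $\rho(X) = 2$, so $\overline{NE}(X)$ has exactly two extremal rays. One is spanned by the class of a fiber $f$ of $\pi$ and is contracted by $\pi$ itself; since $\varphi \neq \pi$, the other extremal ray $R$ must be the one contracted by $\varphi$. I would first identify an explicit curve spanning $R$ and then compute its intersection with $K_X$.

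To pin down $R$, I use that $E$, being a direct sum of nef line bundles, is itself nef, so the tautological class $H := \sO_X(1)$ is nef on $X$. The distinguished summand $\sO_{\bP^1} \subseteq E$ yields a surjection $E \twoheadrightarrow \sO_{\bP^1}$, which under the hyperplane convention corresponds to a section $\sigma_0\colon \bP^1 \hookrightarrow X$ of $\pi$ satisfying $\sigma_0^* H = \sO_{\bP^1}$, and trivially $F \cdot \sigma_0 = 1$, where $F := \pi^* \sO_{\bP^1}(1)$. Thus $[\sigma_0]$ lies in the face $H^\perp \cap \overline{NE}(X)$ but is not proportional to $[f]$; since $\overline{NE}(X)$ is two-dimensional, this forces $R = \mathbb{R}_{\ge 0}[\sigma_0]$.

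With $R$ identified, the relative Euler sequence on $\pi\colon \bP(E) \to \bP^1$ gives the canonical bundle formula $K_X = -(r+1) H + (\deg E - 2)\,F$, and hence $K_X \cdot \sigma_0 = \deg E - 2$. The hypothesis that $\varphi$ is $K_X$-negative on $R$ therefore forces $\deg E < 2$. Since $\deg E = a_1 + \dotsb + a_r$ with $0 \le a_1 \le \dotsb \le a_r$, the only possibilities are $\deg E = 0$, giving $E \cong \sO_{\bP^1}^{\oplus (r+1)}$, or $\deg E = 1$, giving $E \cong \sO_{\bP^1}^{\oplus r} \oplus \sO_{\bP^1}(1)$, as claimed. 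The main conceptual step is the identification of $R$ via the nefness of $H$; the remaining computation is bookkeeping, with only a minor point of care being to keep the sign conventions in the Euler sequence straight under the hyperplane (Grothendieck) convention adopted in the paper.
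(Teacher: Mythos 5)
Your proof is correct and takes essentially the same route as the paper: both identify the non-fiber extremal ray as the one on which $\sO_X(1)$ vanishes (the paper via duality of the nef and Mori cones, you via the explicit section $\sigma_0$ coming from the trivial quotient of $E$) and then apply the canonical bundle formula $K_X = -(r+1)H + (\deg E - 2)F$ to conclude $\deg E < 2$.
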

\begin{proof}
  First observe that $\mathrm{Nef}(X)$ is spanned as a convex cone by the classes of the tautological bundle $\sO_X(1)$ and the pullback $\pi^*\sO_{\bP^1}(1)$. But now $\mathrm{Nef}(X)$ and $\overline{NE}(X)$ are dual cones, so
  \[
    \overline{NE}(X) = \mathbb{R}^+_0\ell + \mathbb{R}^+_0C,
  \]
  where $\ell$ is the class of a line in a fiber of~$\pi$ and $C$ is some curve class with $C.\sO_X(1) = 0$ and $C.\pi^*\sO_{\bP^1}(1) > 0$.
  From
  \[
    K_X = \sO_X(-r-1) \otimes \pi^*\sO_{\bP^1}(a_1 + \dotsb + a_r - 2)
  \]
  it then follows that $C.K_X < 0$ if and only if $a_1 + \dotsb + a_r < 2$.
\end{proof}
\begin{rem}\label{rem:prbundcontr}
  In the situation of Proposition~\ref{prop:prbundcontr}, the case $E \cong \sO_{\bP^1}^{\oplus (r+1)}$ means that $X \cong \bP^r \times \bP^1$, while the case $E \cong \sO_{\bP^1}^{\oplus r}\oplus \sO_{\bP^1}(1)$ means that $X$ is isomorphic to a blow-up of~$\bP^{r+1}$ along a codimension-$2$ linear subspace.
\end{rem}

The following Lemma will later be applied to the central fiber of the family:
\begin{lem}\label{lem:equidim}
  Let $X$ be a smooth projective variety and assume that $X$ is homeomorphic to a $\bP^r$-bundle over~$\bP^m$. Then any surjective morphism $\phi\colon X \to \bP^m$ is equidimensional (i.e., every fiber of~$\phi$ has dimension~$r$).
\end{lem}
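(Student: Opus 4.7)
The plan is to argue by contradiction, exploiting the cohomological structure inherited from the homeomorphism $X \cong P = \bP(E)$. First, I would set $L := \phi^*\sO_{\bP^m}(1)$, a nef line bundle on~$X$ satisfying $L^{m+1} = \phi^*(c_1(\sO(1))^{m+1}) = 0$ in $H^*(X;\bQ)$, while $L^m \neq 0$ since $\phi$ is surjective (so $L^m$ is represented by a general fiber). The case $m = 1$ is immediate: any fiber of $\phi$ has dimension at least $r$, while a fiber of dimension $r + 1 = \dim X$ would force $\phi$ to be constant, contradicting its surjectivity onto $\bP^1$. So from now on I would assume $m \geq 2$.

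Next, I would suppose for contradiction that $\phi$ is not equidimensional. By upper semicontinuity of the fiber dimension there exist $p_0 \in \bP^m$ and an irreducible component $Z \subseteq \phi^{-1}(p_0)$ of dimension $r + s$ for some $s \geq 1$. Its fundamental class $[Z] \in H^{2(m-s)}(X;\bQ)$ is nonzero, and because $\phi$ contracts $Z$ to a single point, the restriction $L|_Z$ is trivial; hence $c_1(L) \cdot [Z] = 0$ in $H^{2(m-s+1)}(X;\bQ)$. Using the homeomorphism I would identify
\[
H^*(X;\bQ) \cong \bQ[h, \xi] / \bigl(h^{m+1},\, \xi^{r+1} + c_1\xi^r + \dotsb + c_{r+1}\bigr),
\]
with $c_i \in H^{2i}(\bP^m;\bQ)$; this is a free module of rank $r+1$ over $\bQ[h]/(h^{m+1})$. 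The aim is then to show that multiplication by $L$ is injective on $H^{2k}(X;\bQ)$ for every $k < m$: combined with $L \cdot [Z] = 0$ and $m - s < m$, this would force $[Z] = 0$, the desired contradiction.

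Establishing this injectivity is the main obstacle. When $L$ is proportional to $h$, it is immediate from inspection of the monomial basis $\{h^i \xi^j\}$. For a general class $L = \alpha h + \beta\xi$, one has to work harder. A useful first observation is that $L$ cannot be ample, since $L^{m+r} = \phi^*(c_1(\sO(1))^{m+r}) = 0$; combined with $\rho(X) \leq b_2(X) = 2$, this forces $L$ to lie on one of the (at most two) boundary rays of the nef cone of $X$, which by the cone theorem corresponds to an extremal contraction of $\overline{NE}(X)$. The plan would then be to exploit this, together with a direct linear-algebraic analysis of the multiplication-by-$L$ maps in the explicit presentation of $H^*(X;\bQ)$ above, to verify the required injectivity in the general case; this final technical step is the central difficulty of the argument.
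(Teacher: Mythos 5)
Your reduction is sound as far as it goes: taking an excess component $Z$ of a fiber, noting $[Z]\ne 0$ (pair with a power of an ample class), noting $c_1(\phi^*\sO_{\bP^m}(1))\cdot[Z]=0$ because $Z$ is contracted, and observing that injectivity of multiplication by $c_1(\phi^*\sO_{\bP^m}(1))$ on $H^{2k}(X,\bQ)$ for $k<m$ would then give a contradiction. But the proposal stops exactly at the point where the work is: you state yourself that the injectivity of these multiplication maps is ``the central difficulty'' and you do not prove it. The ingredients you propose to use for it do not suffice as stated: the ring presentation $\bQ[h,\xi]/(h^{m+1},f)$ only tells you that $L=\alpha h+\beta\xi$ satisfies $L^m\ne 0$, $L^{m+1}=0$ and is nef but not ample, and extremal-contraction language adds nothing cohomological beyond that. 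So this is a genuine gap, not a routine verification left to the reader.

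The gap is fillable, and the paper's proof shows how; in fact it bypasses the general injectivity statement altogether. The paper computes $h^{2k}(X,\bQ)=\min\{k,r\}+1$ for $k\le m$ by Leray--Hirsch, and then shows that the monomials $A^{d-\ell}(\phi^*H)^{k-d+\ell}$, $\ell=0,\dotsc,d=\min\{k,r\}$, in an \emph{ample} class $A$ and $\phi^*H$ form a basis of $H^{2k}(X,\bQ)$ (linear independence is checked by pairing with complementary monomials, using $A^r(\phi^*H)^m>0$). Expanding the class of the fiber component in this basis and pairing with those complementary monomials, the vanishing of $(\phi^*H)^j\cdot[F_0]$ for $j>0$ kills all coefficients except the last, which is nonzero by ampleness of $A$; the same vanishing then forces $k=m$. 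If you want to keep your formulation, the identical basis computation shows that multiplication by $\phi^*H$ carries the basis of $H^{2k}$ into a linearly independent subset of the basis of $H^{2k+2}$ whenever $k+1\le m$, which is precisely the injectivity you need. Either way, the essential missing idea is to bring an ample class into the picture and control the intersection numbers $A^i(\phi^*H)^j\cdot[Z]$, rather than to analyse $L=\alpha h+\beta\xi$ abstractly in the cohomology ring.
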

\begin{proof}
  We mostly copy the proof for a slightly less general statement from~\cite{PS14}.

  Since $\dim X = m + r$, a general fiber of~$\phi$ must have dimension~$r$.
  Let $F_0$ be an irreducible component of a fiber of~$\phi$. Then $F_0$ gives rise to a class
  \[
    [F_0] \in H^{2k}(X, \bQ),
  \]
  where $k$ is the codimension of~$F_0$ in~$X$. Semicontinuity of fiber dimension yields $k \le m$. We must show that $k = m$.

  We first observe that by the Leray--Hirsch theorem, we have
  \[
    h^{2k}(X, \bQ) = \sum_{\ell = 0}^{k} h^{2k-2\ell}(\bP^r,\bQ)\cdot h^{2\ell}(\bP^m, \bQ) = \min\{k,r\} + 1,
  \]
  where we used that $h^{2\ell}(\bP^m, \bQ) = 1$ since $k \le m$. We let $d := \min\{k,r\}$ and denote by $L$ the class of an ample divisor on~$X$ and by~$H$ the class of a hyperplane in~$\bP^m$. We claim that the classes
  \begin{equation}
    L^d.(\phi^*H)^{k-d}, L^{d-1}.(\phi^*H)^{k-d+1}, \dotsc, L.(\phi^*H)^{k-1}, (\phi^*H)^k\label{eq:basis}
  \end{equation}
  form a basis of~$H^{2k}(X, \bQ)$, which can be seen as follows: By the dimension count established above, it suffices to show linear independency, so assume we are given $\lambda_0$, $\dotsc$, $\lambda_d \in \bQ$ such that
  \begin{equation}
    \sum_{\ell=0}^d \lambda_\ell L^{d-\ell}.(\phi^*H)^{k-d+\ell} = 0.\label{eq:2}
  \end{equation}
  Let $\ell_0 \in \{0, \dotsc, d\}$ and assume by induction that $\lambda_\ell = 0$ for all $\ell < \ell_0$. Then intersecting~\eqref{eq:2} with~$L^{r-d+\ell_0}.(\phi^*H)^{m-k+d-\ell_0}$ yields
  \[
    \lambda_{\ell_0} L^r.(\phi^*H)^m = 0,
  \]
  thus $\lambda_{\ell_0} = 0$ since $L^r.(\phi^*H)^m > 0$.

  So \eqref{eq:basis} is indeed a basis of~$H^{2k}(X, \bQ)$ and we can write
  \begin{equation}
    [F_0] = \sum_{\ell=0}^d \alpha_\ell L^{d-\ell}.(\phi^*H)^{k-d+\ell} \label{eq:5}
  \end{equation}
  for some $\alpha_0$, $\dotsc$, $\alpha_d \in \bQ$. We now again let $\ell_0 \in \{0, \dotsc, d\}$ and assume by induction that $\alpha_\ell = 0$ for all $\ell < \ell_0$. Then intersecting~\eqref{eq:5} with~$L^{r-d+\ell_0}.(\phi^*H)^{m-k+d-\ell_0}$ yields
  \begin{equation}
    L^{r-d+\ell_0}.(\phi^*H)^{m-k+d-\ell_0}.[F_0] = \alpha_{\ell_0} L^r.(\phi^*H)^m.\label{eq:6}
  \end{equation}
  For $\ell_0 < d$, the intersection product $(\phi^*H)^{m-k+d-\ell_0}.[F_0]$ vanishes, since $k \le m$ and $F_0$ maps to a point via~$\phi$. So we obtain $\alpha_0 = \dotsb = \alpha_{d-1} = 0$. Since $X$ is projective, this implies $\alpha_d \ne 0$.
  
  For $\ell_0 = d$, equation~\eqref{eq:6} yields
  \[
    L^r.(\phi^*H)^{m-k}.[F_0] = \alpha_dL^r.(\phi^*H)^m.
  \]
  As already observed, the right-hand side of this equation must be non-zero, while the left-hand side is non-zero if and only if $k = m$.
\end{proof}

We can now prove the main result of this section. Note that we must assume the family to be projective in order to apply the relative MMP.
\begin{thm}\label{thm:prbundp1}
  Let $p\colon \sX \to \Delta$ be a smooth projective morphism such that $X_{t} :=  p^{-1}(t)$ is a $\bP^r$-bundle over~$\bP^1$ for all $t \ne 0$. Then, after possibly shrinking $\Delta$, only the following two cases can occur:
  \begin{enumerate}[(i)]
  \item
    There exists a rank-$(r+1)$ vector bundle $\sE$ over $\bP^1 \times \Delta$ such that $\sX$ is isomorphic to~$\bP(\sE)$ over~$\Delta$.
  \item
    There exists a rank-$2$ bundle $\sE$ over~$\bP^r \times \Delta$ such that $\sX$ is isomorphic to~$\bP(\sE)$ over~$\Delta$ (this case can only occur if $X_t \cong \bP^r \times \bP^1$ for $t \ne 0$).
  \end{enumerate}
\end{thm}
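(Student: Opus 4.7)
The plan is to run the relative Minimal Model Program on $\sX\to\Delta$ and show that the resulting contraction is the desired projective bundle. First I would pin down the relative Picard group: every general fibre $X_t$ is a $\bP^r$-bundle over $\bP^1$ and therefore satisfies $H^i(X_t,\sO_{X_t})=0$ for $i\ge 1$, so cohomology-and-base-change gives $R^ip_*\sO_\sX=0$ for $i\ge 1$ and hence $H^i(\sX,\sO_\sX)=0$ for $i\ge 1$. Since $\sX\to\Delta$ is topologically trivial by Ehresmann and contractibility of $\Delta$, the exponential sequence yields $\Pic(\sX)\cong H^2(\sX,\bZ)\cong H^2(X_t,\bZ)\cong\Pic(X_t)$ for every $t\in\Delta$, so in particular $\rho(\sX/\Delta)=2$. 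Fix $t_0\ne 0$ and let $\ell\in\overline{NE}(\sX/\Delta)$ be the class of a line in a fibre of the $\bP^r$-bundle projection $\pi_{t_0}\colon X_{t_0}\to\bP^1$; then $K_{\sX/\Delta}\cdot\ell=-(r+1)$, so the relative cone and contraction theorems give a fibre-type morphism $\Phi\colon\sX\to\sY$ over $\Delta$ contracting the ray spanned by $\ell$, with $\Phi|_{X_t}=\pi_t$ and $\sY_t\cong\bP^1$ for every $t\ne 0$.

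The heart of the argument is to upgrade $\Phi$ to a Zariski-locally-trivial $\bP^r$-bundle. Once this is achieved, $\sY\to\Delta$ is a smooth family of $\bP^1$'s, hence trivial after shrinking $\Delta$, and $\sX\cong\bP(\sE)$ for some rank-$(r+1)$ bundle $\sE$ over $\bP^1\times\Delta$, which is case~(i). The key technical input is Lemma~\ref{lem:equidim}: by Ehresmann, $X_0$ is diffeomorphic to $X_{t_0}$ and hence homeomorphic to a $\bP^r$-bundle over $\bP^1$, so once one has established that $\sY_0\cong\bP^1$, the lemma applied to the surjection $\Phi|_{X_0}\colon X_0\to\sY_0$ forces every fibre of $\Phi|_{X_0}$ to have dimension exactly $r$. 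This equidimensionality, combined with standard characterisations of Mori contractions of smooth varieties along rays of length $r+1$ with general fibre $\bP^r$ (in the style of Ando and Fujita), then gives the $\bP^r$-bundle structure of $\Phi$.

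The main obstacle I anticipate is the circular interdependence between smoothness of $\sY$ at $0\in\Delta$ and equidimensionality of $\Phi|_{X_0}$: one is needed to apply Lemma~\ref{lem:equidim}, the other to identify $\sY_0$ as $\bP^1$. I would try to break this circularity by first showing directly, using the local structure of $\sY$ together with the smoothness of $\sX$ and an upper-semicontinuity argument for fibre dimension, that $\dim\sY=2$ and $\sY$ is normal; then ruling out reducibility or singularity of $\sY_0$ by a length-$(r+1)$ Mori-theoretic computation together with the classical fact that a smooth family of $\bP^1$'s over $\Delta$ is trivial after shrinking. Finally, case~(ii) can only occur when the general fibre is $\bP^r\times\bP^1$, since by Proposition~\ref{prop:prbundcontr} and Remark~\ref{rem:prbundcontr} this is the unique situation in which $\overline{NE}(\sX/\Delta)$ admits a \emph{second} $K_{\sX/\Delta}$-negative extremal ray; applying the same strategy to that other ray contracts $\sX$ to a smooth family of $\bP^r$'s over~$\Delta$, yielding the desired $\bP^1$-bundle structure $\sX\cong\bP(\sE)$ over $\bP^r\times\Delta$.
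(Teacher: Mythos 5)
Your overall strategy (relative MMP, Lemma~\ref{lem:equidim} for equidimensionality, a Fujita/Ando-type criterion for the bundle structure) is close in spirit to the paper's, but there are two genuine gaps. First, you propose to contract the specific ray spanned by the fibre-line class $\ell$. The relative contraction theorem only lets you contract a $K_{\sX}$-negative \emph{extremal} ray of $\overline{NE}(\sX/\Delta)$, and you have not established that $\ell$ is extremal in the relative cone: the central fibre $X_0$ may contribute curve classes on the far side of $\ell$ (equivalently, the line bundle restricting to $\pi_t^*\sO_{\bP^1}(1)$ for $t\ne 0$ need not be nef on $X_0$), in which case $\ell$ lies in the interior of $\overline{NE}(\sX/\Delta)$ and cannot be contracted. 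The paper sidesteps this by contracting \emph{whatever} extremal ray the relative cone theorem produces and then classifying, via Proposition~\ref{prop:prbundcontr}, the possible restrictions to the general fibre, namely a contraction onto $\bP^m$ with $m\in\{1,r,r+1\}$. Relatedly, your assertion that a second $K$-negative extremal ray exists only when $X_t\cong\bP^r\times\bP^1$ contradicts Proposition~\ref{prop:prbundcontr}: such a ray also exists for $E\cong\sO_{\bP^1}^{\oplus r}\oplus\sO_{\bP^1}(1)$, where the second contraction is \emph{birational} (blow-down to $\bP^{r+1}$). This case $m=r+1$ needs its own argument --- the paper identifies the image of the exceptional divisor as a flat family whose fibres are degree-one subschemes, hence linear $\bP^{r-1}$'s, and invokes Ein--Shepherd-Barron to recognize $X_0$ as a blow-up --- and your proposal omits it entirely.

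Second, the ``circularity'' you flag between the identification of $Y_0$ and the equidimensionality of $\phi_0$ is not actually resolved in your sketch; ``showing directly that $\dim\sY=2$ and $\sY$ is normal, then ruling out reducibility or singularity of $\sY_0$'' is precisely the hard step. The paper breaks it with a concrete argument that uses neither smoothness of $\sY$ nor equidimensionality of $\phi_0$: since $\Pic\sX\to\Pic X_t$ is an isomorphism, there is $\sL\in\Pic\sX$ with $\sL|_{X_t}\cong\phi_t^*\sO_{\bP^m}(1)$ for $t\ne 0$; being numerically trivial on the fibres of the Mori contraction $\phi_0$, the bundle $\sL|_{X_0}$ descends to some $L'\in\Pic Y_0$; semicontinuity gives $h^0(L')\ge m+1$, and $c_1(L')^m=1$, so Kobayashi--Ochiai yields $(Y_0,L')\cong(\bP^m,\sO_{\bP^m}(1))$. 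Only \emph{after} this does Lemma~\ref{lem:equidim} enter, to show $\phi_0$ is equidimensional, followed by \cite[2.12]{Fuj87} to upgrade $\phi_0$ to a $\bP^{r+1-m}$-bundle and by $\sE:=\Phi_*\sH$ to produce the vector bundle. Without some such argument your proof is incomplete at its central step.
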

\begin{proof}
  Since $X_{t}$ carries a $\bP^r$-bundle structure for $t \ne 0$, $K_{\sX}$ is not $p$-nef. By the relative cone and contraction theorems (cf.~\cite[Thm.~4.12]{Nak87}), there exists a relative Mori contraction $\Phi\colon \sX \to \sY$ over~$\Delta$ (after possibly shrinking $\Delta$), where $q\colon \sY \to \Delta$ is a projective morphism. Let $\phi_t := \Phi|_{X_t}\colon X_t \to Y_t := q^{-1}(t)$. Then $\phi_t$ is a Mori contraction for any~$t$. Since $\sY$ is irreducible, semicontinuity of fiber dimension implies that $q$ is equidimensional.

  In view of Proposition~\ref{prop:prbundcontr} and Remark~\ref{rem:prbundcontr}, the general fiber of~$q$ is isomorphic to~$\bP^m$ for some $m \in \{1, r, r+1\}$. We observe that, for any $t \in \Delta$, the restriction $\Pic \sX \to \Pic X_t$ is an isomorphism. In particular, there exists a line bundle $\sL \in \Pic \sX$ such that $\sL|_{X_t} \cong \phi_t^*\sO_{\bP^m}(1)$ for $t \ne 0$. It follows that $\sL$ is numerically trivial on the fibers of $\phi_0$, so, since $\phi_0$ is a Mori contraction,
  \[
    \sL|_{X_0} = \phi_0^* L'\quad\text{for some $L' \in \Pic Y_0$.}
  \]
  By semicontinuity, $h^0(L') \ge m+1$. Since furthermore $c_1(L')^n = 1$, we have $(Y_0, L') \cong (\bP^m, \sO_{\bP^m}(1))$ by~\cite[Thm~1.1]{KO73} (see also~\cite[I.1.2]{Fuj90a}), so in particular,
  \[
    \sY \cong \bP^m \times \Delta.
  \]

  We now first consider the case $m \in \{1, r\}$. This means that $\phi_t$ gives to~$X_t$ a $\bP^{r+1-m}$-bundle structure over~$\bP^m$ for $t \ne 0$. Since we have shown above that $Y_0 \cong \bP^m$, we can apply Lemma~\ref{lem:equidim} to conclude that $\phi_0$ is equidimensional. A general fiber of~$\phi_0$ is isomorphic to~$\bP^{r+1-m}$, since it is a smooth degeneration of the fibers of $\phi_t$, $t \ne 0$. We can now apply \cite[2.12]{Fuj87} to conclude that $\phi_0$ is indeed a $\bP^{r+1-m}$-bundle. Using again the fact that $\Pic \sX \to \Pic X_t$ is an isomorphism, we obtain a line bundle $\sH \in \Pic \sX$ whose restriction to any fiber of~$\Phi$ is isomorphic to~$\sO_{\bP^{r+1-m}}(1)$. Setting $\sE := \Phi_* \sH$, we obtain an isomorphism $\sX \cong \bP(\sE)$ over~$\Delta$.

  It remains to treat the case $m = r+1$. Then
  \[
    \Phi\colon \sX \to \sY = \bP^{r+1} \times \Delta
  \]
  is birational. Let $\sW \subset \sX$ be the exceptional set of~$\Phi$. Since $\Phi$ is a Mori contraction and $W_t := X_t \cap \sW$ is a divisor in $X_t$ for $t \ne 0$, it follows that $\sW$ is an irreducible divisor in~$\sX$. Let $\sZ := \Phi(\sW) \subset \sY$ and $Z_t := \sZ \cap Y_t$. Then $\sZ$ is an irreducible subscheme of $\sY \cong \bP^{r-1} \times \Delta$ and hence flat over the smooth curve~$\Delta$ (\cite[Prop.~III.9.7]{Har77}). Now by Proposition~\ref{prop:prbundcontr}, we know that $Z_t \subset \bP^{r+1}$ is a codimension-$2$ linear subspace for $t \ne 0$, so by flatness, $Z_0 \subset \bP^{r-1}$ is an $(r-1)$-dimensional subscheme of degree~$1$, hence $Z_0$ must also be a linearly embedded~$\bP^{r-1} \subset \bP^{r+1}$. It follows that $X_0$ is isomorphic to the blow-up of $Y_0 \cong \bP^{r+1}$ along $Z_0$ by~\cite[Theorem~1.1]{ESB89}. So in this case, $\sX$ is isomorphic to $\bP(\sO_{\bP^1}^{\oplus r} \oplus \sO_{\bP^1}(1)) \times \Delta$ over~$\Delta$.
\end{proof}
\begin{rem}
  $\bP^r$-bundles over~$\bP^1$ were already studied for arbitrary~$r$ by Brieskorn in~\cite{Bri65}. The problem of classifying (global) deformations of such bundles was raised there, but was only solved for~$r=1$. The case $r=2$ has been considered in~\cite{Nak98}.
\end{rem}

\section{Almost homogeneous projective bundles}\label{sec:almhomprojbun}
In this section, we investigate criteria for projective bundles to be almost homogeneous.

The following Lemma enables us to construct from a given almost homogeneous projective bundle an almost homogeneous projective bundle of higher dimension:
\begin{lem}\label{lem:sumlinebund}
  Let $M$ be a complex manifold with $H^1(\sO_M) = 0$ and let $E$ be a vector bundle on~$M$ such that $\bP(E)$ is almost homogeneous. If $H^0(E) \ne 0$ and $H^1(E^*) = 0$, then also $\bP(E \oplus \sO_M)$ is almost homogeneous.
\end{lem}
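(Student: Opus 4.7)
The plan is to realize $X := \bP(E)$ as a divisor in $Y := \bP(E \oplus \sO_M)$ via the embedding $H \mapsto H \oplus \sO_M$; this divisor is cut out by the section of $\sO_Y(1)$ coming from $\sO_M \hookrightarrow E \oplus \sO_M$, so $N_{X/Y} \cong \sO_X(1)$ and hence $H^0(N_{X/Y}) = H^0(M, E)$ via $\pi_{X*}\sO_X(1) \cong E$. Since $T_{Y,p} = T_{X,p} \oplus N_{X/Y,p}$ at any $p \in X$, it suffices to produce enough global vector fields on $Y$ to cover both summands at a generic such $p$: the tangential part by extending vector fields from $X$ (using almost homogeneity of $X$), and the normal part by a translation-type vector field coming from $H^0(E)$.

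The first step I would carry out is the cohomology vanishing $H^1(Y, T_Y(-1)) = 0$, which by $0 \to T_Y(-1) \to T_Y \to T_Y|_X \to 0$ ensures that the restriction $H^0(T_Y) \to H^0(T_Y|_X)$ is surjective. Feeding the relative tangent sequence $0 \to T_{Y/M} \to T_Y \to \pi_Y^* T_M \to 0$ and the relative Euler sequence $0 \to \sO_Y \to \pi_Y^*(E^* \oplus \sO_M) \otimes \sO_Y(1) \to T_{Y/M} \to 0$ through the projection formula, and using the fibrewise vanishing of the cohomology of $\sO_Y(-1)$, the problem reduces to $H^1(M, E^* \oplus \sO_M) = 0$. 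This is precisely the conjunction of the two hypotheses $H^1(E^*) = 0$ and $H^1(\sO_M) = 0$.

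Next I would exploit $H^0(E) \ne 0$: any section $s \in H^0(E) = H^0(\mathrm{Hom}(\sO_M, E))$ defines a nilpotent endomorphism $A_s = \left(\begin{smallmatrix} 0 & s \\ 0 & 0 \end{smallmatrix}\right)$ of $E \oplus \sO_M$. The pushforward of the Euler sequence identifies $\pi_{Y*} T_{Y/M}$ with $\End(E \oplus \sO_M)/\sO_M$; combined with $H^1(\sO_M) = 0$, $A_s$ lifts to a vertical vector field $\xi_s \in H^0(T_Y)$. A direct fibrewise computation --- representing a point of $X$ as a line $\bC\beta \subset E^* \oplus \sO_M$ with $\beta \in E^* \setminus 0$ and differentiating the one-parameter flow of $A_s$ at $t = 0$ --- identifies the normal component of $\xi_s|_X$ with $\pm s$ under $H^0(N_{X/Y}) = H^0(M, E)$. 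Hence for non-zero $s$, $\xi_s$ has non-vanishing normal component on a dense open subset of $X$.

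To conclude: pick $p \in X$ lying both in the open orbit of $\Aut^0 X$ on $X$ (which exists by almost homogeneity) and outside the zero locus of the section of $\sO_X(1)$ associated to $s$. Almost homogeneity of $X$ gives $H^0(T_X) \twoheadrightarrow T_{X,p}$; via the inclusion $T_X \hookrightarrow T_Y|_X$, each such section lifts by the first step to an element of $H^0(T_Y)$ whose value at $p$ lies in $T_{X,p}$, and these values span $T_{X,p}$. Adding in $\xi_s$, which contributes a non-zero vector in the one-dimensional $N_{X/Y,p}$, we obtain $H^0(T_Y) \twoheadrightarrow T_{Y,p}$; by semicontinuity this holds on a neighbourhood, so $T_Y$ is generically globally generated and $Y$ is almost homogeneous. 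The main technical obstacle is the cohomology vanishing in the first step, where both hypotheses must combine via the Euler sequence; without this extension property the translation fields $\xi_s$ alone would not suffice, as one also needs to transport the full algebra $H^0(T_X)$ from $X$ to $Y$ to cover the tangential directions.
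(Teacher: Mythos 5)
Your proof is correct, and its skeleton coincides with the paper's: you realize $\bP(E)$ as the zero divisor of the section of $\sO(1)$ coming from $\sO_M\hookrightarrow E\oplus\sO_M$, identify the normal bundle with $\sO(1)$ and $H^0(N)$ with $H^0(E)$, and you prove exactly the same vanishing $H^1(T_Y(-1))=0$ via the relative tangent and relative Euler sequences, reducing to $H^1(E^*)=H^1(\sO_M)=0$. Where you genuinely diverge is in how the normal direction is produced. The paper considers the pencil of divisors $Y_t$ cut out by $(ts,1)\in H^0(E\oplus\sO_M)$, observes that all $Y_t\cong\bP(E)$ (so the family is isotrivial), concludes that the Kodaira--Spencer image of $s\in H^0(N_{Y_t|X})$ in $H^1(T_{Y_t})$ vanishes, and thus lifts $s$ to $H^0(T_X|_{Y_t})$; it then needs the cohomology vanishing to push \emph{everything} (tangential and normal) from $T_X|_{Y_t}$ up to $H^0(T_X)$. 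You instead build the global vertical field $\xi_s$ directly from the nilpotent endomorphism $\left(\begin{smallmatrix}0&s\\0&0\end{smallmatrix}\right)$ of $E\oplus\sO_M$ via $\pi_*T_{Y/M}\cong\End(E\oplus\sO_M)/\sO_M$, and verify fibrewise that its normal component along $\bP(E)$ is $s$; your $\xi_s$ is already global on $Y$, so the extension step is only needed for the tangential fields coming from $H^0(T_{\bP(E)})$. The two mechanisms are really the same phenomenon viewed differently --- your $\xi_s$ is the infinitesimal generator of the flow moving $Y_0$ through the paper's isotrivial family, i.e.\ an explicit witness for the vanishing of the Kodaira--Spencer class --- but your version is more elementary and self-contained, avoiding the deformation-theoretic language entirely. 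One cosmetic remark: $A_s$ does not need to be ``lifted'' to a vertical field (no $H^1(\sO_M)=0$ is needed there); it simply maps forward under $H^0(\End(E\oplus\sO_M))\to H^0(\End(E\oplus\sO_M)/\sO_M)\cong H^0(T_{Y/M})$, and is nonzero there because it is not a scalar.
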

\begin{proof}
  Let $0 \ne s \in H^0(E)$. For any $t \in \bC$, we consider the section $\tilde{s}_t := (ts, 1) \in H^0(E \oplus \sO_M)$. We obtain a short exact sequence of vector bundles
  \[
    0 \longrightarrow \sO_M \stackrel{\cdot\tilde{s}_t}{\longrightarrow} E \oplus \sO_M \longrightarrow Q_t \longrightarrow 0.
  \]
  It follows from the snake lemma that $Q_t \cong E$ for any $t \in \bC$.

  We now write $X := \bP(E \oplus \sO_M)$ and interpret $\tilde{s}_t$ as an element of $H^0(\sO_X(1))$, where we denote by $\sO_X(1)$ the tautological line bundle on $\bP(E \oplus \sO_M)$. The zero locus of this section is a divisor $Y_t \subset X$ with $Y_t \cong \bP(Q_t) \cong \bP(E)$ for any $t \in \bC$. Its normal bundle is given by $N_{Y_t|X} \cong \sO_{Y_t}(1)$, so we have in particular $H^0(N_{Y_t|X}) \cong H^0(E)$ by the Leray spectral sequence.

  Clearly, the one-dimensional subpace of~$H^0(N_{Y_t|X})$ defined by the family~$(Y_t)_{t \in \bC}$ is generated by the section $s \in H^0(E)$ chosen at the beginning. If we consider the long exact cohomology sequence associated to the normal bundle sequence
  \begin{equation}\label{eq:normbund}
    0 \longrightarrow T_{Y_t} \longrightarrow T_X|_{Y_t} \longrightarrow N_{Y_t|X} \longrightarrow 0,
  \end{equation}
  the section $s \in H^0(N_{Y_t|X})$ maps to an element $\xi \in H^1(T_{Y_t})$ which describes the infinitesimal change of the complex structure on~$Y_t$ in the family~$(Y_t)_{t \in \bC}$. But we have seen above that all $Y_t$ are isomorphic, so it follows that $\xi = 0$. This implies that $s$ lifts to a section in $H^0(T_X|_{Y_t})$. Since $T_{Y_t} \cong T_{\bP(E)}$ is generically globally generated by hypothesis, the sequence~\eqref{eq:normbund} then implies that also $T_X|_{Y_t}$ is generically globally generated.

  We now let $x \in Y_t$ be a general point such that $T_{X,x}$ is generated by global sections in~$H^0(T_{X}|_{Y_t})$. In order to show that $T_{X,x}$ is also generated by global sections in~$H^0(T_X)$, it is sufficient to prove that the natural restriction map $H^0(T_X) \to H^0(T_{X}|_{Y_t})$ is surjective. By the long exact cohomology sequence associated to
  \[
    0 \longrightarrow T_X(-Y_t) \longrightarrow T_X \longrightarrow T_X|_{Y_t} \longrightarrow 0,
  \]
  it suffices to show that $H^1(T_X(-Y_t)) = 0$. To this aim, we first denote by $\pi\colon X \to M$ the natural projection and consider the relative tangent sequence tensorized by~$\sO_X(-Y_t)$:
  \begin{equation}\label{eq:reltang}
    0 \longrightarrow T_{X/M}(-Y_t) \longrightarrow T_{X}(-Y_t) \longrightarrow \pi^*T_M(-Y_t) \longrightarrow 0.
  \end{equation}
  Since $\sO_X(-Y_t) \cong \sO_X(-1)$, we have $H^q(\pi^*T_M(-Y_t)) = 0$ for all~$q$ by the Leray spectral sequence. From the long exact cohomology sequence associated to~\eqref{eq:reltang}, we thus conclude $H^1(T_X(-Y_t)) \cong H^1(T_{X/M}(-Y_t))$.

  We finally consider the relative Euler sequence
  \[
    0 \longrightarrow \sO_X \longrightarrow \sO_X(1) \otimes \pi^* (E^*\oplus \sO_M) \longrightarrow T_{X/M} \longrightarrow 0.
  \]
  Tensorizing this sequence by~$\sO_X(-Y_t) \cong \sO_X(-1)$ yields
  \[
    0 \longrightarrow \sO_X(-Y_t) \longrightarrow \pi^* (E^* \oplus \sO_M) \longrightarrow T_{X/M}(-Y_t) \longrightarrow 0.
  \]
  Since again $H^q(\sO_X(-Y_t)) = 0$ for all~$q$ by Leray, we obtain
  \[
    H^1(T_{X/M}(-Y_t)) \cong H^1(\pi^* (E^* \oplus \sO_M)) \cong H^1(E^* \oplus \sO_M) = 0.
  \]
  So we conclude that $H^1(T_X(-Y_t)) = 0$, which implies by our previous considerations that $T_{X,x}$ is generated by global sections in~$H^0(T_X)$.
\end{proof}

As a first application of this Lemma, we consider projective bundles over~$\bP^1$ and reprove a classically known fact (cf.~\cite{Bri65}):
\begin{cor}\label{cor:prbundp1}
  Let $X$ be a $\bP^r$-bundle over $\bP^1$. Then $X$ is almost homogeneous.
\end{cor}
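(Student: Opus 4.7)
The plan is to induct on~$r$, applying Lemma~\ref{lem:sumlinebund} in the inductive step. By Grothendieck's splitting theorem, $X \cong \bP(E)$ with $E \cong \bigoplus_{i=0}^{r} \sO_{\bP^1}(a_i)$, $a_0 \le \cdots \le a_r$; since tensoring $E$ by a line bundle does not change $\bP(E)$, I normalize to $a_0 = 0$. The base case $r = 0$ is immediate: $X \cong \bP^1$ is homogeneous.

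For the inductive step, my first attempt is to split $E = \sO_{\bP^1} \oplus E'$ with $E' = \bigoplus_{i \ge 1} \sO_{\bP^1}(a_i)$, so that $\bP(E')$ is almost homogeneous by the inductive hypothesis, and to apply Lemma~\ref{lem:sumlinebund} to $M = \bP^1$ and~$E'$. The hypotheses $H^1(\sO_{\bP^1}) = 0$ and $H^0(E') \ne 0$ (the latter since every $a_i \ge 0$) hold for free, while $H^1((E')^*) = \bigoplus_i H^1(\sO_{\bP^1}(-a_i))$ vanishes exactly when every $a_i \le 1$. In that case the lemma immediately yields that $\bP(E' \oplus \sO_{\bP^1}) \cong X$ is almost homogeneous.

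The main obstacle is the case when some $a_i \ge 2$, where this last hypothesis fails. My plan is to try an alternative splitting: for a well-chosen index~$j$, tensor $E$ by $\sO_{\bP^1}(-a_j)$ and rewrite $\bP(E) \cong \bP(\tilde E \oplus \sO_{\bP^1})$ with $\tilde E := \bigoplus_{i \ne j} \sO_{\bP^1}(a_i - a_j)$; note that $\bP(\tilde E)$ is again almost homogeneous by induction. The cohomology hypothesis $H^1(\tilde E^*) = 0$ now translates to $a_j \ge a_r - 1$, and $H^0(\tilde E) \ne 0$ to the existence of some $i \ne j$ with $a_i \ge a_j$. Such a~$j$ can be found unless $a_r$ occurs with multiplicity~$1$ and $a_{r-1} \le a_r - 2$. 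The hardest point will be to dispose of this residual pathological case, where no choice of twist makes the lemma directly applicable. To conclude, I would fall back on the fact that $X = \bP(\bigoplus \sO_{\bP^1}(a_i))$ is toric: the product of the standard $\bC^*$-action on~$\bP^1$ with the diagonal scaling of the summands of~$E$ gives an action of an $(r+1)$-dimensional torus on~$X$, whose open orbit (which exists for dimensional reasons) establishes almost homogeneity.
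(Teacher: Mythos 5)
Your argument is correct, but the route you end up taking is genuinely different from the paper's, and for a good reason. The paper's proof is precisely your first attempt: it writes $X = \bP(E' \oplus \sO_{\bP^1})$ with $E' = \bigoplus_{i\ge 1}\sO_{\bP^1}(a_i)$, $a_i \ge 0$, invokes induction on~$r$ for $\bP(E')$, and applies Lemma~\ref{lem:sumlinebund} after checking ``$H^0(E')\ne 0$ and $H^1(E')=0$'' --- and then it stops. As you observe, the lemma as stated (and as proved: its final step reduces $H^1(T_X(-Y_t))$ to $H^1(E'^*\oplus\sO_M)$ via the relative Euler sequence) requires $H^1(E'^*)=0$, which for $E'=\bigoplus\sO_{\bP^1}(a_i)$ fails as soon as some $a_i\ge 2$; already the Hirzebruch surface $\bF_2$ escapes this verification, so the paper's own check of the hypotheses is, as written, incomplete. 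Your analysis of the alternative twists and your identification of the residual case ($a_r$ attained once with $a_{r-1}\le a_r-2$) are accurate, and your toric fallback is sound: $\bP\bigl(\bigoplus_i\sO_{\bP^1}(a_i)\bigr)$ is a smooth projective toric variety, and the $(r+1)$-dimensional torus you exhibit acts faithfully, hence with trivial generic stabilizer and therefore with an open orbit (concretely, over $\bC^*\subset\bP^1$ the bundle trivializes equivariantly and the fiber torus already has the open orbit $(\bC^*)^r\subset\bP^r$). Two remarks: the toric argument alone proves the corollary in all cases at once, so your induction and twist analysis, while correct, are logically superfluous; and the phrase ``exists for dimensional reasons'' deserves one more sentence, since equal dimensions do not force an open orbit without faithfulness of the action.
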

\begin{proof}
  Any $\bP^r$-bundle over $\bP^1$ is of the form $\bP(V)$ for some rank-$(r+1)$-bundle $V$ on~$\bP^1$. By Grothendieck, $V$ splits as a direct sum of line bundles, so, after tensorizing with a suitable line bundle, we can assume
  \[
    V \cong \sO_{\bP^1} \oplus \sO_{\bP^1}(a_1) \oplus \dotsb \oplus \sO_{\bP^1}(a_r), \qquad a_1, \dotsc, a_r \ge 0.
  \]
  If we now let $E := \sO_{\bP^1}(a_1) \oplus \dotsb \oplus \sO_{\bP^1}(a_r)$, we can assume by induction on~$r$ that $\bP(E)$ is almost homogeneous. But now clearly $H^0(E) \ne 0$ and $H^1(E) = 0$, so we can apply Lemma~\ref{lem:sumlinebund} to conclude that $X = \bP(V)$ is almost homogeneous.
\end{proof}

In a similar spirit, the following Corollary constructs higher-dimensional almost homogeneous projective bundles over $\bP^n$ for $n \ge 2$ if  given an almost homogeneous projective bundle $\bP(E)$ over $\bP^n$.
An important application is the case $E = T_{\bP^n}$, the varieties $\bP(T_{\bP^n})$, $n \ge 2$, being classical examples of rational homogeneous spaces.
\begin{cor}\label{cor:rk2plus}
  Let $E$ be a vector bundle on~$\bP^n$, $n \ge 2$, such that $\bP(E)$ is almost homogeneous. Let $d_1$, $\dotsc$, $d_r \in \bZ$ such that $H^0(E(-d_j)) \ne 0$ and $H^1(E^*(d_j)) = 0$ for all $j = 1$, $\dotsc$, $r$. Then $\bP(E \oplus \sO_{\bP^n}(d_1) \oplus \dotsb \oplus \sO_{\bP^n}(d_r))$ is almost homogeneous. In particular, $\bP(T_{\bP^n}\oplus \sO(1)^{\oplus r})$ is almost homogeneous for any~$r\ge 0$.
\end{cor}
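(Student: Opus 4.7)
My plan is to prove the main claim by induction on~$r$, at each step applying Lemma~\ref{lem:sumlinebund} after a suitable twist. The base case $r = 0$ is the hypothesis. For the inductive step I set
\[
  E' := E \oplus \sO_{\bP^n}(d_1) \oplus \dotsb \oplus \sO_{\bP^n}(d_{r-1}),
\]
assume inductively that $\bP(E')$ is almost homogeneous, and exploit the fact that projectivization is invariant under tensoring with a line bundle to write
\[
  \bP(E' \oplus \sO_{\bP^n}(d_r)) \cong \bP(F \oplus \sO_{\bP^n}), \qquad F := E'(-d_r).
\]
Since $\bP(F) \cong \bP(E')$ remains almost homogeneous, it will suffice to verify the hypotheses of Lemma~\ref{lem:sumlinebund} for~$F$ on~$M := \bP^n$.

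The vanishing $H^1(\sO_{\bP^n}) = 0$ is immediate. Splitting
\[
  F = E(-d_r) \oplus \bigoplus_{j<r} \sO_{\bP^n}(d_j - d_r), \qquad F^* = E^*(d_r) \oplus \bigoplus_{j<r} \sO_{\bP^n}(d_r - d_j),
\]
I read off that $H^0(F) \supseteq H^0(E(-d_r)) \ne 0$ by hypothesis, and that $H^1(F^*)$ decomposes as $H^1(E^*(d_r)) = 0$ (again by hypothesis) together with terms $H^1(\sO_{\bP^n}(d_r - d_j))$, which all vanish because $H^1$ of any line bundle on~$\bP^n$ is zero whenever $n \ge 2$. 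This is the single point where the hypothesis $n \ge 2$ is actually used. Lemma~\ref{lem:sumlinebund} then delivers the inductive step.

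For the ``in particular'' statement with $E = T_{\bP^n}$ and $d_1 = \dotsb = d_r = 1$, I would note that $\bP(T_{\bP^n})$ is a rational homogeneous variety, hence almost homogeneous, and verify the two numerical hypotheses via the Euler sequence. Twisting $0 \to \sO_{\bP^n} \to \sO_{\bP^n}(1)^{\oplus(n+1)} \to T_{\bP^n} \to 0$ by~$\sO_{\bP^n}(-1)$ yields $h^0(T_{\bP^n}(-1)) = n+1 \ne 0$; dualizing and twisting by~$\sO_{\bP^n}(1)$ produces
\[
  0 \longrightarrow \Omega^1_{\bP^n}(1) \longrightarrow \sO_{\bP^n}^{\oplus(n+1)} \longrightarrow \sO_{\bP^n}(1) \longrightarrow 0,
\]
from which $H^1(T_{\bP^n}^*(1)) = H^1(\Omega^1_{\bP^n}(1)) = 0$ follows, since the induced map on~$H^0$ is the canonical isomorphism $\bC^{n+1} \cong H^0(\sO_{\bP^n}(1))$ and $H^1(\sO_{\bP^n}) = 0$. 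I do not anticipate a genuinely hard step: the argument is essentially organized bookkeeping on top of Lemma~\ref{lem:sumlinebund}, with the only subtlety being to keep track of where $n \ge 2$ is invoked.
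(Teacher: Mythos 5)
Your proposal is correct and follows essentially the same route as the paper: induction on $r$ via Lemma~\ref{lem:sumlinebund} applied to the twist $E'(-d_r)$, using that $H^1$ of any line bundle on $\bP^n$ vanishes for $n\ge 2$ to handle the extra summands. The verification of the hypotheses for $E=T_{\bP^n}$, $d_j=1$ via the Euler sequence is also accurate.
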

\begin{proof}
  The case $r=0$ is trivial; for $r \ge 1$ we let $E' := E \oplus \sO_{\bP^n}(d_1) \oplus \dotsb \oplus \sO_{\bP^n}(d_{r-1})$. By induction, we can assume that $\bP(E')$ is almost homogeneous. The given hypotheses imply $H^0(E'(-d_r)) \ne 0$ and $H^1(E'^*(d_r)) = 0$. So we can apply Lemma~\ref{lem:sumlinebund} to conclude that also $\bP(E'(-d_r) \oplus \sO_{\bP^n}) \cong \bP(E' \oplus \sO_{\bP^n}(d_r)) = \bP(E \oplus \sO_{\bP^n}(d_1) \oplus \dotsb \oplus \sO_{\bP^n}(d_r))$ is almost homogeneous.
\end{proof}

In order to deal with global deformations of almost homogeneous projective bundles, we need criteria for certain projectivized unstable vector bundles to be almost homogeneous:
\begin{lem}\label{lem:almhombun}
  Let $M$ be a compact Kähler manifold and let $E$ be a rank-$2$ vector bundle on~$M$ which is slope-unstable with respect to a Kähler form $\omega$ on~$M$. Let $D \subset E$ be the maximally destabilizing subsheaf and let $Z \subset M$ be the codimension-$2$ locally complete intersection subscheme defined by the natural short exact sequence
  \begin{equation}\label{eq:serre}
    0 \longrightarrow D \longrightarrow E \longrightarrow D^* \otimes \det E  \otimes \sI_Z \longrightarrow 0.
  \end{equation}

  \begin{enumerate}[(i)]
  \item\label{it:nec}
    Let $r \in \bN_0$ and $L_1$, $\dotsc$, $L_r \in \Pic M$.
    If $\bP(E \oplus L_1 \oplus \dotsb \oplus L_r)$ is almost homogeneous, then the group
    \[
      \Aut^0_Z M := \{\varphi\in\Aut^0 M \mid \varphi(Z) = Z\}
    \]
    acts on~$M$ with an open orbit.
  \item\label{it:suff}
    Suppose $H^1(\sO_M) = 0$, $H^1(D^{\otimes 2} \otimes \det E^*) = 0$ and $h^0(D^{\otimes 2} \otimes \det E^*) \ge 2$ (these conditions are automatically satisfied for $M = \bP^n$). If $\Aut^0_Z M$ acts on~$M$ with an open orbit, then $\bP(E)$ is almost homogeneous.
  \end{enumerate}
\end{lem}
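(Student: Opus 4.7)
For part~(\ref{it:nec}), set $P := \bP(E \oplus L_1 \oplus \dotsb \oplus L_r)$ with natural projection $\pi\colon P \to M$. Since $\pi_*\sO_P = \sO_M$, Blanchard's lemma yields a Lie group homomorphism $\Aut^0 P \to \Aut^0 M$, and the image under the submersion $\pi$ of an open orbit in $P$ is an open orbit of the image in $M$. Each $\phi \in \Aut^0 P$ covering $\bar\phi \in \Aut^0 M$ determines an isomorphism
\[
  \bar\phi^*(E \oplus L_1 \oplus \dotsb \oplus L_r) \cong (E \oplus L_1 \oplus \dotsb \oplus L_r) \otimes L'
\]
for some $L' \in \Pic M$. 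By the Krull--Schmidt theorem for coherent sheaves on compact Kähler manifolds, and since $E$ is the only rank-$2$ summand, we conclude $\bar\phi^* E \cong E \otimes L''$ for some $L'' \in \Pic M$; as $Z$ is intrinsic to $E$ and invariant under line bundle twists, $\bar\phi^{-1}(Z) = Z$.

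For part~(\ref{it:suff}), the plan is to show that $T_{\bP(E)}$ is generated by its global sections at a general point. Pushing the relative tangent sequence of $\pi\colon \bP(E) \to M$ forward gives the Atiyah-type sequence
\[
  0 \to \End_0 E \to \pi_*T_{\bP(E)} \to T_M \to 0
\]
with connecting map $\partial \colon H^0(T_M) \to H^1(\End_0 E)$ measuring the obstruction to lifting a vector field on $M$ to $\bP(E)$. Let $\mathfrak{b} \subset \End_0 E$ be the subsheaf of trace-free endomorphisms preserving $D$, which fits into
\[
  0 \to D^{\otimes 2}\otimes\det E^* \to \mathfrak{b} \to \sO_M \to 0.
\]
The central claim is that $\partial(v) = 0$ for $v$ in the Lie algebra of $\Aut^0_Z M$: such a $v$ preserves $Z$ and, using $H^1(\sO_M) = 0$ to rigidify line bundles, also preserves $D$ and $\det E$ up to isomorphism, so its associated family $\{\phi_t^*E\}$ preserves the subsheaf $D$ and its Kodaira--Spencer class lies in the image of $H^1(\mathfrak{b}) \to H^1(\End_0 E)$. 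The hypotheses $H^1(\sO_M) = 0$ and $H^1(D^{\otimes 2}\otimes\det E^*) = 0$ then give $H^1(\mathfrak{b}) = 0$, whence $\partial(v) = 0$.

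Consequently the Lie algebra of $\Aut^0_Z M$ lifts to global sections of $T_{\bP(E)}$. At a general point $x = [v] \in \bP(E)$ over $p \in M$, the horizontal direction $T_{M,p}$ is covered by such lifts, since $\Aut^0_Z M$ acts on $M$ with an open orbit by hypothesis. For the vertical direction, any non-zero $\sigma \in H^0(D^{\otimes 2}\otimes\det E^*)$, which exists since $h^0 \ge 2$, may be viewed as a homomorphism $E/D \to D$ and yields a vertical vector field on $\bP(E)$: for $v \notin D_p$ with $\sigma_p \ne 0$, we have $\sigma(v) \in D_p \setminus \{0\}$ and $\langle v\rangle \cap D_p = 0$, so the induced tangent vector $\sigma(v)$ in $E_p/\langle v\rangle = T_{\pi,[v]}$ is non-zero. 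Combining horizontal and vertical sections, $T_{\bP(E),x}$ is generated by global sections at a general $x$, so $\bP(E)$ is almost homogeneous.

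The main technical obstacle is the reduction of $\partial(v)$ to a class in the image of $H^1(\mathfrak{b})$ for $v$ preserving $Z$, which requires interpreting $v$ as an infinitesimal deformation of the pair $(E, D)$ and handling the non-local-freeness of $\mathfrak{b}$ along $Z$; this can be managed by working on $M \setminus Z$ and extending using the codimension-$2$ condition.
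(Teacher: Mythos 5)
Your part~(\ref{it:nec}) follows the paper's route (induced automorphism of~$M$, the twist $\varphi^*\tilde E\cong\tilde E\otimes L$, canonicity of~$D$ and hence of~$Z$) and is essentially correct, modulo two small omissions: the Krull--Schmidt step as you phrase it (``$E$ is the only rank-$2$ summand'') presupposes that $E$ is indecomposable, and in the case where $E$ splits into line bundles you should instead just note that $Z=\emptyset$; and ``$Z$ is intrinsic to~$E$'' needs the remark that $\varphi\in\Aut^0 M$ acts trivially on $H^2(M,\mathbb{R})$, so $\varphi^*\omega$ is numerically equivalent to~$\omega$ and $\varphi^*$ carries the maximal destabilizing subsheaf to the maximal destabilizing subsheaf. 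Both are easily repaired.

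Part~(\ref{it:suff}) has a genuine gap. Everything rests on $H^1(\mathfrak{b})=0$, which you derive from the claimed sequence $0\to D^{\otimes2}\otimes\det E^*\to\mathfrak{b}\to\sO_M\to0$. That sequence fails along~$Z$: a trace-free endomorphism preserving $D$ that acts as $a$ on $D$ must act as $-a$ on $E/D$, and such an endomorphism exists near $z\in Z$ only if $2a$ annihilates the local extension class of~\eqref{eq:serre} in $\mathcal{E}xt^1(D^*\otimes\det E\otimes\sI_Z,D)_z$ --- a class that is nonzero at every point of~$Z$ precisely because $E$ is locally free there. Hence the quotient $\mathfrak{b}/(D^{\otimes2}\otimes\det E^*)$ is only an ideal sheaf cutting out (a thickening of)~$Z$, whose $H^1$ surjects onto $H^0(\sO_Z)/\bC$ when $H^1(\sO_M)=0$ and so is nonzero already for $\ell(Z)\ge 2$ (which occurs in the paper's own application). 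Your proposed fix of working on $M\setminus Z$ does not help: for a locally free sheaf the restriction $H^1(M,\cdot)\to H^1(M\setminus Z,\cdot)$ is injective but far from surjective, so no vanishing is available there either. The paper sidesteps all of this by arguing at the group level: for $\varphi\in\Aut^0_ZM$, the hypotheses $H^1(\sO_M)=0$ and $H^1(D^{\otimes2}\otimes\det E^*)=0$ force $\varphi^*E$ and $E$ to be the \emph{same} locally free extension of $D^*\otimes\det E\otimes\sI_Z$ by $D$ (uniqueness of such extensions), so $\varphi$ lifts to $\bP(E)$ outright, and the vertical directions come from $h^0(\End E)\ge h^0(D^{\otimes2}\otimes\det E^*)\ge2$. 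If you wish to keep your infinitesimal formulation, the clean repair is to first prove $\phi_t^*E\cong E$ for the one-parameter group $\phi_t=\exp(tv)$ by this extension-uniqueness argument, deduce that the family $\{\phi_t^*E\}$ is locally trivial, and conclude $\partial(v)=0$ --- but that is essentially the paper's proof. Your use of a nilpotent section $\sigma\in H^0(D^{\otimes2}\otimes\det E^*)\subset H^0(\End E)$ to produce the vertical vector field is correct and in fact slightly more economical than the paper's dimension count.
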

\begin{proof}
  Observe first that the maximally destabilizing subsheaf $D \subset E$ is uniquely determined.

  In order to prove~(\ref{it:nec}), we let $\tilde{E} := E \oplus L_1 \oplus \dotsb \oplus L_r$ and suppose that $\bP(\tilde{E})$ is almost homogeneous. We denote by $\pi \colon \bP(\tilde{E}) \to M$ the natural projection. Since $\pi_* \sO_{\bP(\tilde{E})} = \sO_M$, any automorphism $\psi \in \Aut^0 \bP(\tilde{E})$ induces an automorphism $\varphi \in \Aut^0 M$ such that the diagram
  \begin{equation}\label{eq:fibaut}
    \xymatrix{
      \bP(\tilde{E}) \ar[d]^\pi \ar[r]^\psi &\bP(\tilde{E})\ar[d]^\pi\\
      M \ar[r]^\varphi &M}
  \end{equation}
  commutes. In the case that $E$ is a direct sum of two line bundles, we have $Z = \emptyset$, so by the existence of diagram~\eqref{eq:fibaut}, $\Aut^0_Z M = \Aut^0 M$ acts on~$M$ with an open orbit.

  In the case that $E$ is not a direct sum of two line bundles, we must study diagram~\eqref{eq:fibaut} more thoroughly. By the universal property of the pullback, $\psi$ factors as $\psi = \tau\circ\tilde{\psi}$ where $\tau$ and $\tilde{\psi}$ are such that the diagram
  \begin{equation}\label{eq:pullback}
    \xymatrix{
      \bP(\tilde{E}) \ar[dr]^\pi \ar[r]^{\tilde{\psi}} &\bP(\varphi^* \tilde{E}) \ar[r]^\tau \ar[d]^{\varphi^*\pi} &\bP(\tilde{E}) \ar[d]^\pi\\
      &M \ar[r]^\varphi &M}
  \end{equation}
  commutes. Since $\psi$ and $\tau$ are isomorphisms, also $\tilde{\psi}$ must be an isomorphism. This implies that there exists a line bundle $L \in \Pic M$ such that $\varphi^* \tilde{E} \cong \tilde{E} \otimes L$, i.e.,
  \[
    \varphi^*E \oplus \varphi^*L_1 \oplus \dotsb \oplus \varphi^*L_r \cong (E\otimes L) \oplus (L_1\otimes L) \oplus \dotsb \oplus (L_r\otimes L).
  \]
  Since $E$ is unsplit, we must have
  \begin{equation}\label{eq:samebund}
    \varphi^* E \cong E \otimes L
  \end{equation}
  by the uniqueness of the direct sum decomposition of vector bundles. Since $\varphi^*$ acts trivially on~$H^2(M, \mathbb{R})$, the Kähler form $\varphi^* \omega$ is numerically equivalent to~$\omega$. This implies that $\varphi^* D$ is the maximally $\omega$-destabilizing subsheaf of~$\varphi^* E$. Furthermore, clearly $D \otimes L$ is the maximally ($\omega$-)destablilizing subsheaf of~$E \otimes L$. By~\eqref{eq:samebund} and the uniqueness of the maximally destabilizing subsheaf, the two short exact sequences one obtains from~\eqref{eq:serre} by applying $\varphi^*$ respectively by tensorizing with~$L$ must be the same, so in particular, we obtain $\sI_{\varphi^{-1}(Z)} \cong \sI_Z$, so $\varphi(Z) = Z$, finishing the proof of~(\ref{it:nec}).

  Turning to~(\ref{it:suff}), we now suppose that $\Aut^0_ZM$ acts on~$M$ with an open orbit. Let $\varphi \in \Aut^0 M$ be any automorphism with $\varphi(Z) = Z$. Applying $\varphi^*$ to~\eqref{eq:serre}, we obtain
  \[
    0 \longrightarrow \varphi^* D \longrightarrow \varphi^* E \longrightarrow \varphi^* D^* \otimes \det \varphi^*E  \otimes \sI_Z \longrightarrow 0.
  \]
  Since $H^1(\sO_M) = 0$, we have $\varphi^* D \cong D$ and $\det \varphi^* E \cong \det E$, so the bunde $\varphi^* E$ is an extension of the same rank-$1$ sheaves as the bundle $E$. Since $H^1(D^{\otimes 2} \otimes \det E^*) = 0$, there is, up to isomorphism, only one locally free sheaf given by such an extension (cf.~\cite[§1.5.1]{OSS80}), so we obtain $\varphi^* E \cong E$. In particular, we get an isomorphism $\tilde{\psi}\colon \bP(E) \to \bP(\varphi^* E)$ over~$M$ such that the diagram
  \[
    \xymatrix{
      \bP({E}) \ar[dr]^\pi \ar[r]^{\tilde{\psi}} &\bP(\varphi^* {E}) \ar[r]^\tau \ar[d]^{\varphi^*\pi} &\bP(E) \ar[d]^\pi\\
      &M \ar[r]^\varphi &M}
  \]
  is commutative, where the right-hand part of this diagram is just the pull-back square analogous to diagram~\eqref{eq:pullback}. If we let $\psi := \tau \circ \tilde{\psi}$, we get an automorphism $\psi \in \Aut \bP(E)$ which induces the given automorphism $\varphi \in \Aut^0_Z M$.

  To conclude, it suffices to show that there exists an at least $1$-dimensional family of automorphisms of~$\bP(E)$ over~$M$. This is equivalent to showing that
  \[
    h^0(\End E) = h^0(E^* \otimes E) \ge 2.
  \]
  But now, tensoring \eqref{eq:serre} with $E^* \cong E \otimes \det E^*$ yields
  \[
    0 \longrightarrow E\otimes D \otimes \det E^* \longrightarrow E^*\otimes E \longrightarrow E \otimes D^* \otimes \sI_Z \longrightarrow 0.
  \]
  Finally, $D \subset E$ implies $D^{\otimes 2} \otimes \det E^* \subset E \otimes D \otimes \det E^*$, so we obtain
  \[
    h^0(E^* \otimes E) \ge h^0(E \otimes D \otimes \det E^*) \ge h^0(D^{\otimes 2} \otimes \det E^*) \ge 2.\qedhere
  \]
\end{proof}

\section{Examples of global deformations}\label{sec:degen}
In order to obtain global deformations of projectivized rank-$2$ vector bundles where almost homogeneity is not preserved, we cite the following construction by Strømme:
\begin{prop}[{\cite[§4]{Str83}}]\label{prop:strom}
  Let $E$ be a rank-$2$ vector bundle on~$\bP^2$ with first Chern class $c_1 := c_1(E)$. Let $d \in \bZ$ and suppose that we are given
  \begin{enumerate}[(i)]
  \item
    a section $\tau \in H^0(E(d-c_1))$ vanishing along a subscheme $Z \subset \bP^2$ of codimension~$2$,
  \item
    a section $F \in H^0(\sO_{\bP^2}(2d-c_1))$ whose zero divisor is disjoint from~$Z$.
  \end{enumerate}
  Then there exists a rank-$2$ vector bundle $\mathcal{E}$ on $\bP^2 \times \Delta$ such that $E_t := \mathcal{E}|_{\bP^2 \times \{t\}} \cong E$ for $t \ne 0$ and $E_0 \cong D$, where $D$ is a rank-$2$ vector bundle on~$\bP^2$ fitting in the short exact sequence
  \[
    0 \longrightarrow \sO_{\bP^2}(d) \longrightarrow D \longrightarrow \sI_Z(c_1-d) \longrightarrow 0.
  \]
  We have $D \cong E$ if and only if $H^0(E(-d)) \ne 0$.
\end{prop}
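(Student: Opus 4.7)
The plan is to build $\mathcal E$ as an explicit rank-$2$ vector bundle on $\bP^2\times\Delta$ interpolating between two presentations of the same Chern data. The section $\tau$ already gives $E$ as the extension $(*)$ of $\sI_Z(d)$ by $\sO(c_1-d)$, while $D$ should be an extension of $\sI_Z(c_1-d)$ by $\sO(d)$: the two line bundles exchange roles as sub and quotient. The section $F$ is what produces the extension class defining $D$ via the Serre correspondence: since $F$ does not vanish on $Z$, it determines a class in $\Ext^1(\sI_Z(c_1-d),\sO(d))$ whose middle term is locally free (by the Serre criterion for rank-$2$ extensions of this shape), thereby defining $D$ as a bundle on $\bP^2$.

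With $D$ in hand, I would construct the family $\mathcal E$ as a monad on $\bP^2\times\Delta$ whose terms are built from $\pi_1^*\sO(c_1-d)$ and $\pi_1^*\sO(d)$, possibly twisted by $\pi_2^*\sO_\Delta(-[0])$, and whose differentials encode $\tau$, $F$ and the parameter $t$. The central fiber of the monad should reproduce the extension defining $D$, while the general fiber, after using the invertibility of $t$ to eliminate one of the differentials, should yield the extension $(*)$ for $E$. Verification of the two restrictions then reduces to restricting the monad to each fiber and identifying the resulting cohomology with the claimed bundle.

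For the ``if and only if'' statement, the forward direction is immediate: a subbundle $\sO(d)\hookrightarrow D\cong E$ exhibits a non-zero section of $E(-d)$. For the converse, given $0\ne s\in H^0(E(-d))$, removing possible divisorial zeros gives an injection $\sO(d)\hookrightarrow E$ with torsion-free cokernel $\sI_{Z'}(c_1-d)$ for some codimension-$2$ subscheme $Z'$; Chern-class equality forces $|Z'|=|Z|$, and an analysis of the resulting extension class in $\Ext^1(\sI_{Z'}(c_1-d),\sO(d))$ identifies $E$ with $D$.

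The principal obstacle is setting up the monad so that $\mathcal E$ is locally free on all of $\bP^2\times\Delta$---in particular across $Z\times\{0\}$, where a first-approximation kernel would fail to be locally free (locally it would become the Koszul-type syzygy sheaf of a regular sequence of length three). The non-vanishing of $F$ on $Z$ is exactly what allows one to introduce a correction term curing this defect, and writing down the precise correction is where the bulk of the work lies.
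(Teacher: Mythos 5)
First, note that the paper does not prove this statement at all---it is quoted from Strømme \cite[§4]{Str83}---so your attempt can only be measured against the cited construction, not against an argument in the text. Your overall architecture is the right one: $\tau$ presents $E$ as an extension of $\sI_Z(d)$ by $\sO_{\bP^2}(c_1-d)$, $D$ is the role-swapped extension whose class is manufactured from $F$, and the family is an explicit monad/elementary modification on $\bP^2\times\Delta$ whose naive version fails to be locally free along $Z\times\{0\}$ and is repaired using the disjointness of $Z$ and $V(F)$. That diagnosis (the uncorrected kernel degenerates to the syzygy sheaf of a length-three regular sequence) is accurate. But as written the proposal defers exactly the step that constitutes the proof: the monad is never exhibited, and your description of its terms as ``built from $\pi_1^*\sO(c_1-d)$ and $\pi_1^*\sO(d)$'' cannot be literally correct, since the middle term must contain $p^*E$ itself. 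A working choice is the complex
\[
  p^*\sO_{\bP^2}(c_1-d)\xrightarrow{\,(-F,\;\tau,\;t)^{T}\,} p^*\bigl(\sO_{\bP^2}(d)\oplus E\oplus\sO_{\bP^2}(c_1-d)\bigr)\xrightarrow{\,(-t,\;\pi,\;-F)\,} p^*\sO_{\bP^2}(d),
\]
where $\pi\colon E\to\sI_Z(d)\subset\sO_{\bP^2}(d)$ is the quotient by $\tau$: the first map is a subbundle inclusion and the second is surjective precisely because $Z\cap V(F)=\emptyset$, the composite vanishes since $\pi\tau=0$, and the cohomology is $E$ for $t\ne 0$ and $D$ for $t=0$. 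Relatedly, your appeal to the Serre criterion to define $D$ is incomplete: $F$ only prescribes a generating section of $\mathcal{E}xt^1(\sI_Z(c_1-d),\sO_{\bP^2}(d))$, and one must check that it lifts to a global class in $\Ext^1(\sI_Z(c_1-d),\sO_{\bP^2}(d))$ (the obstruction lives in $H^2(\sO_{\bP^2}(2d-c_1))$, which does vanish here because $F\ne 0$ forces $2d\ge c_1$); this is avoided entirely by defining $D$ as the pullback--pushout $F^*F_*E$ of the extension defining $E$, which also makes local freeness of $D$ evident.

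The converse of the final equivalence is also not closed by your sketch. ``Removing divisorial zeros'' of $0\ne s\in H^0(E(-d))$ produces an injection $\sO_{\bP^2}(d+e)\hookrightarrow E$ with $e\ge 0$, not necessarily $\sO_{\bP^2}(d)$, and equality of Chern classes identifies at best the length of $Z'$, not the scheme $Z'$ with $Z$, so ``an analysis of the extension class identifies $E$ with $D$'' is an assertion rather than an argument. The efficient route is to compose $s\colon\sO_{\bP^2}(d)\to E$ with $\pi\colon E\to\sI_Z(d)$: the composite is a constant, which must be $0$ when $Z\ne\emptyset$, so $s$ factors through $\ker\pi=\sO_{\bP^2}(c_1-d)$ and hence $c_1\ge 2d$; since $F\ne 0$ gives $2d\ge c_1$, one is reduced to the case $c_1=2d$ (or $Z=\emptyset$), where an isomorphism $E\cong D$ can be written down directly from the monad. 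In short: the plan is sound and correctly locates where each hypothesis is used, but the two load-bearing steps---the explicit family and the ``if'' direction of the last claim---are still missing.
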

\begin{rem}
  The assumptions in Proposition~\ref{prop:strom} can be satisfied for any rank-$2$ bundle $E$ on $\bP^2$ provided $d$ is chosen sufficiently large.
\end{rem}

\begin{cor}\label{cor:stromtang}
  For any $d \ge 2$ there exists a rank-$2$ vector bundle $\sE$ on $\mathbb{P}^2\times\Delta$ with the following properties:
\begin{enumerate}[(i)]
\item\label{first}
  $E_t := \mathcal{E}|_{\mathbb{P}^2\times\{t\}}$ is isomorphic to $T_{\mathbb{P}^2}$ for all $t \ne 0$,
\item\label{second}
  $E_0$ sits inside a short exact sequence
  \begin{equation}\label{eq:degsequence}
    0 \longrightarrow \sO_{\bP^2}(d) \longrightarrow E_0 \longrightarrow \sI_Z(3-d) \longrightarrow 0,
  \end{equation}
  where $Z \subset \bP^2$ is a codimension-$2$ locally complete intersection given as the zero locus of a section in $H^0(T_{\bP^2}(d-3))$.
\end{enumerate}
\end{cor}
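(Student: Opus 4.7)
The strategy is to apply Proposition~\ref{prop:strom} directly with $E = T_{\bP^2}$, so that $c_1 = 3$, and the given $d \ge 2$. Both conclusions of the corollary are immediate from that proposition once I produce the required sections $\tau$ and $F$, and in particular $Z$ will automatically be of the claimed form since it is defined as the zero locus of $\tau \in H^0(T_{\bP^2}(d-3))$.

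For the section $\tau \in H^0(T_{\bP^2}(d-3))$ with codimension-two zero locus, I twist the Euler sequence by $\sO_{\bP^2}(d-3)$. For $d \ge 3$ this exhibits $T_{\bP^2}(d-3)$ as a quotient of $\sO_{\bP^2}(d-2)^{\oplus 3}$, hence globally generated, and a general section of a globally generated rank-$2$ bundle on a smooth surface has pure codimension-two vanishing locus. For the boundary case $d = 2$ the same twisted sequence gives $h^0(T_{\bP^2}(-1)) = 3$ and $h^0(T_{\bP^2}(-2)) = 0$, so no nonzero section of $T_{\bP^2}(-1)$ can vanish on a divisor of positive degree; the zero locus is therefore zero-dimensional (indeed a single reduced point, since $c_2(T_{\bP^2}(-1)) = 1$). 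In either case I call the resulting vanishing subscheme $Z$.

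For the section $F \in H^0(\sO_{\bP^2}(2d-3))$, the degree $2d-3$ is at least one and $Z$ consists of finitely many points, so a general polynomial of that degree defines a curve disjoint from $Z$. Proposition~\ref{prop:strom} now produces a rank-$2$ vector bundle $\sE$ on $\bP^2 \times \Delta$ with $E_t \cong T_{\bP^2}$ for $t \ne 0$ and $E_0$ sitting in the exact sequence~\eqref{eq:degsequence}, as required. No step presents a genuine obstacle; the only mildly delicate point is arranging pure codimension-two vanishing for $\tau$ in the edge case $d = 2$, which the Chern-class computation above handles.
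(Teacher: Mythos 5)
Your proposal is correct and follows the same route as the paper: apply Proposition~\ref{prop:strom} to $E = T_{\bP^2}$ (so $c_1 = 3$) with a general section $\tau$ of $T_{\bP^2}(d-3)$ vanishing in codimension two and a curve of degree $2d-3 \ge 1$ avoiding $Z$. The only (harmless) difference is that your separate treatment of $d=2$ is unnecessary: the twisted Euler sequence exhibits $T_{\bP^2}(-1)$ as a quotient of $\sO_{\bP^2}^{\oplus 3}$, so $T_{\bP^2}(d-3)$ is globally generated for every $d \ge 2$ and the general-section argument applies uniformly, which is how the paper states it.
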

\begin{proof}
  Since $d \ge 2$, the vector bundle $T_{\bP^2}(d-3)$ is globally generated, so a general section in $H^0(T_{\bP^2}(d-3))$ vanishes along a subscheme $Z \subset \bP^2$ of codimension $2$. Furthermore, $2d - 3 \ge 1$, so there exists a divisor in $\lvert \sO_{\bP^2}(2d - 3) \rvert$ disjoint from~$Z$. Thus we can apply Proposition~\ref{prop:strom}.
\end{proof}

We can use this construction to get examples for global deformations of almost homogeneous projective bundles:
\begin{thm}\label{thm:notalmhom}
  Fix $d \ge 2$. Let $\sE$ be the rank-$2$ bundle on $\bP^2 \times \Delta$ constructed in Corollary~\ref{cor:stromtang}. For any $r \ge 0$ we consider the family
  \[
    \sX := \bP(\sE \oplus \sO_{\bP^2\times\Delta}(1)^{\oplus r}) \stackrel{\pi}{\longrightarrow} \bP^2 \times \Delta
  \]
  over~$\Delta$. Then:
  \begin{enumerate}[(i)]
  \item\label{it:almhom}
    For any $t \ne 0$, the fiber $X_t \cong \bP(T_{\bP^2} \oplus \sO_{\bP^2}(1)^{\oplus r})$ is an $(r+3)$-dimensional almost homogeneous manifold.
  \item\label{it:centr}
    If $d \ge 4$, the fiber $X_0 \cong \bP(E_0 \oplus \sO_{\bP^2}(1)^{\oplus r})$ is not almost homogeneous.
  \item\label{it:notfano}
    $X_t$ is Fano if and only if $t \ne 0$.
  \end{enumerate}
\end{thm}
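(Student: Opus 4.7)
Part~(\ref{it:almhom}) is the final assertion of Corollary~\ref{cor:rk2plus} applied with $E = T_{\bP^2}$; the required hypotheses $H^0(T_{\bP^2}(-1)) \ne 0$ and $H^1(\Omega_{\bP^2}(1)) = 0$ follow from the Euler sequence and Bott vanishing.

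For part~(\ref{it:centr}), the plan is to apply Lemma~\ref{lem:almhombun}(\ref{it:nec}) with $E = E_0$. The line subbundle $\sO_{\bP^2}(d) \hookrightarrow E_0$ from~\eqref{eq:degsequence} has slope $d > 3/2 = \mu(E_0)$ (since $d \ge 2$), is saturated (its quotient $\sI_Z(3-d)$ being torsion-free), and is maximal among line subbundles of $E_0$: twisting~\eqref{eq:degsequence} by $\sO_{\bP^2}(-d-1)$ yields $H^0(E_0(-d-1)) = 0$. Hence $\sO_{\bP^2}(d)$ is the maximally destabilizing subsheaf of $E_0$ and~\eqref{eq:degsequence} coincides with the sequence~\eqref{eq:serre} of the lemma; moreover $E_0$ is non-split, since $Z$ is nonempty. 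If $X_0$ were almost homogeneous, the lemma would force $\Aut^0_Z \bP^2$ to act on $\bP^2$ with an open orbit; but its identity component is a connected subgroup of $PGL_3$ preserving the finite set $Z$ setwise, hence fixing every point of $Z$ pointwise. For $d \ge 4$ one computes $\lvert Z \rvert = c_2(T_{\bP^2}(d-3)) = d^2 - 3d + 3 \ge 7$, and since $T_{\bP^2}(d-3)$ is globally generated, the defining section can be chosen so that $Z$ contains four points in general position; the pointwise $PGL_3$-stabilizer of any such quadruple is trivial, contradicting the open-orbit assumption. The main obstacle here is justifying rigorously that a general $Z$ is in sufficiently general position; I expect this to follow from the richness of $H^0(T_{\bP^2}(d-3))$, which for $d \ge 4$ allows us to impose independent vanishing conditions at any chosen four generic points.

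For part~(\ref{it:notfano}), the formula $-K_{X_t} = (r+2)\sO_{X_t}(1) - r\pi^*H$ holds uniformly in $t$ since $\det E_0 = \sO_{\bP^2}(3) = \det T_{\bP^2}$. For $t \ne 0$, the bundle $F_t := T_{\bP^2} \oplus \sO_{\bP^2}(1)^{\oplus r}$ is ample as a direct sum of ample bundles, so $\sO_{X_t}(1)$ is ample; moreover $F_t \otimes \sO_{\bP^2}(-1) = T_{\bP^2}(-1) \oplus \sO_{\bP^2}^{\oplus r}$ is globally generated, so $\sO_{X_t}(1) - \pi^*H$ is nef. The decomposition $-K_{X_t} = 2\sO_{X_t}(1) + r\bigl(\sO_{X_t}(1) - \pi^*H\bigr)$ presents $-K_{X_t}$ as ample plus nef, hence ample, and $X_t$ is Fano. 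For $t = 0$, pick a point $z \in Z$ and a line $C \subset \bP^2$ through $z$ with $C \cap Z = \{z\}$ (possible since $Z$ is finite and nonempty). A Koszul computation yields $\sI_Z(3-d)|_C \cong \sO_C(2-d) \oplus \sO_z$ (together with vanishing $\mathrm{Tor}_1$), so restricting~\eqref{eq:degsequence} to $C$ and saturating the line subsheaf to absorb the torsion at $z$ gives $E_0|_C \cong \sO_C(d+1) \oplus \sO_C(2-d)$. The quotient $\sO_C(2-d)$ of $F_0|_C$ induces a section $s\colon C \to X_0$ with $s^*\sO_{X_0}(1) = \sO_C(2-d)$, so $-K_{X_0} \cdot s(C) = (r+2)(2-d) - r$. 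This intersection is strictly negative when $d \ge 3$ or $r \ge 1$, so $-K_{X_0}$ is not nef. In the remaining case $(d,r) = (2,0)$ the intersection vanishes, but $\sO_{X_0}(1) \cdot s(C) = 0$ as well, so $\sO_{X_0}(1)$ --- and hence $-K_{X_0} = 2\sO_{X_0}(1)$ --- fails the Nakai--Moishezon criterion for ampleness. Thus $X_0$ is never Fano.
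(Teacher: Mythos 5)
Parts~(i) and~(iii) of your argument are correct. For~(iii) you follow essentially the same route as the paper (same anticanonical formula; ample-plus-nef for $t\ne 0$; a line meeting $Z$ for $t=0$), just in more detail: the paper simply observes that restricting~\eqref{eq:degsequence} to lines shows $E_0$ is not ample and that subtracting a nef class cannot restore ampleness, whereas you exhibit the destabilizing section $s(C)$ explicitly and handle the boundary case $(d,r)=(2,0)$ via Nakai--Moishezon. One small correction there: $\mathrm{Tor}_1(\sO_Z,\sO_C)$ does \emph{not} vanish --- it is exactly the skyscraper $\sO_z$ that produces the torsion summand in $\sI_Z(3-d)|_C$ --- but this does not affect your argument, since you only use the locally free quotient $\sO_C(2-d)$. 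Your careful verification that $\sO_{\bP^2}(d)$ is the maximal destabilizing subsheaf of $E_0$ (so that \eqref{eq:degsequence} really is the sequence~\eqref{eq:serre} required by Lemma~\ref{lem:almhombun}) is a step the paper leaves implicit, and it is welcome.

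The gap is in part~(ii), and it is exactly the one you flagged: you reduce to showing that no nontrivial connected subgroup of $PGL_3$ fixes $Z$ pointwise, and you want to do this by arranging that $Z$ contains four points in general position --- but you never prove that the zero scheme of a (general) section of $T_{\bP^2}(d-3)$ has this property. Your geometric reduction is in fact sound (the zero locus of a nonzero holomorphic vector field on $\bP^2$ is the union of the projectivized eigenspaces of an element of $\mathfrak{sl}_3$, hence always contained in a line union a point, so four points with no three collinear kill the stabilizer), and the genericity could be established by a dimension count over the incidence variety of pairs (line, point); but as written the proof is incomplete. The paper avoids the issue entirely by proving the stronger, non-generic statement $H^0(T_{\bP^2}\otimes\sI_Z)=0$ directly: tensor the Koszul sequence $0\to\sO_{\bP^2}\to T_{\bP^2}(d-3)\to\sI_Z(2d-3)\to 0$ of the defining section $\tau$ with $T_{\bP^2}(3-2d)$ and use $H^0(T_{\bP^2}\otimes T_{\bP^2}(-d))=0$ (Euler sequence) together with $h^1(T_{\bP^2}(3-2d))=h^1(\Omega^1_{\bP^2}(2d-6))=0$ for $d\ge 4$. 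This shows there is \emph{no} nonzero vector field vanishing along $Z$, for \emph{any} codimension-$2$ zero scheme of a section of $T_{\bP^2}(d-3)$, so $\Aut^0_Z\bP^2$ is finite and certainly has no open orbit. I recommend replacing your general-position argument with this cohomological one; it is shorter and removes the only genericity hypothesis from the proof.
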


\begin{proof}
  (\ref{it:almhom}) is just Corollary~\ref{cor:rk2plus}.
  
  For~(\ref{it:centr}), we note that by Lemma~\ref{lem:almhombun}~(\ref{it:nec}), it suffices to show that $\{\varphi \in \Aut^0 \bP^2 \mid \varphi(Z) = Z \}$ does not act on~$\bP^2$ with an open orbit. Since $Z$ is given as the zero locus of a section $\tau \in H^0(T_{\bP^2}(d-3))$, there is an exact sequence
  \[
    0 \to \sO_{\bP^2} \longrightarrow T_{\bP^2}(d-3) \longrightarrow \sI_Z(2d-3) \longrightarrow 0.
  \]
  Tensorizing with~$T_{\bP^2}(3-2d)$ yields
  \[
    0 \to T_{\bP^2}(3-2d) \longrightarrow T_{\bP^2}\otimes T_{\bP^2}(-d) \longrightarrow T_{\bP^2}\otimes\sI_Z \longrightarrow 0.
  \]
  But now since $d \ge 4$, we have
  \[
    h^1(T_{\bP^2}(3-2d)) = h^1(\Omega^1_{\bP^2}(2d-6)) = 0,
  \]
  and from the Euler sequence it follows that also $H^0(T_{\bP^2} \otimes T_{\bP^2}(-d)) = 0$. So we obtain $H^0(T_{\bP^2}\otimes\sI_Z) = 0$ which means that there are no non-zero vector fields on $\bP^2$ vanishing along~$Z$.

  It remains to prove~(\ref{it:notfano}). We denote by $\pi_t\colon X_t \to \bP^2$ the restriction of $\pi$ to~$X_t$. Standard calculations yield
  \[
    K_{X_t}^* = \sO_{X_t}(r+2) \otimes \pi_t^* \sO_{\bP^2}(-r).
  \]
  Thus $X_t$ is Fano if and only if the $\bQ$-line bundle
  \begin{equation}\label{eq:qlinbund}
    \sO_{X_t}(1) \otimes \pi_t^* \sO_{\bP^2}\bigl( -\tfrac{r}{r+2} \bigr)
  \end{equation}
  is ample. But if we now restrict the short exact sequence~\eqref{eq:degsequence} to lines in~$\bP^2$, it follows immediately that $E_0$ is not ample. So also $E_0 \oplus \sO_{\bP^2}(1)^{\oplus r}$ is not ample, which is equivalent to the statement that $\sO_{X_0}(1)$ is not ample. Thus also \eqref{eq:qlinbund} is not ample for $t=0$. For $t \ne 0$, we observe that $T_{\bP^2}(q)$ is an ample $\bQ$-bundle for all rational $q > -1$, thus \eqref{eq:qlinbund} is ample for $t \ne 0$.
\end{proof}
The argument used to prove Theorem~\ref{thm:notalmhom}~(\ref{it:notfano}) can be used to show that nontrivial global deformations of $\bP(T_{\bP^2})$ cannot be Fano. Furthermore, the the assumption $d \ge 4$ in Theorem~\ref{thm:notalmhom} (\ref{it:centr}) is actually necessary:
\begin{thm}\label{thm:fanoalmhom}
  Let $\mathcal{E}$ be a rank-$2$ vector bundle on $\bP^2 \times \Delta$ such that $E_t := \mathcal{E}|_{\bP^2 \times \{t\}}$ is isomorphic to $T_{\bP^2}$ for all $t \ne 0$.
  \begin{enumerate}[(i)]
  \item
    Assume that $\bP(E_0)$ is Fano. Then $E_0 \cong T_{\bP^2}$.
  \item
     Suppose that $H^0(E_0(-4)) = 0$. Then $\bP(E_0)$ is almost homogeneous.
  \end{enumerate}
\end{thm}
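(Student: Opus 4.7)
Both parts begin with the same setup. The Chern classes $c_1(E_0) = 3H$ and $c_2(E_0) = 3$ are invariant under flat deformation, so $\det E_0 \cong \sO_{\bP^2}(3)$, and a direct computation gives $-K_{\bP(E_0)} = \sO_{\bP(E_0)}(2)$, so $\bP(E_0)$ is Fano iff $E_0$ is ample. Semicontinuity of $h^0$ on the family $\sE \to \bP^2 \times \Delta$ yields $h^0(E_0(-k)) \ge h^0(T_{\bP^2}(-k))$, and in particular $h^0(E_0(-1)) \ge 3$. Let $d := \max\{k : H^0(E_0(-k)) \ne 0\}$, so $d \ge 1$, and under the hypothesis of part~(ii) also $d \le 3$. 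A nonzero section $s \in H^0(E_0(-d))$ cannot be nowhere vanishing---this would split $E_0$ as $\sO(d) \oplus \sO(3-d)$ and force $c_2 = d(3-d) \ne 3$---and its zero locus cannot contain a divisorial component (either by maximality of $d$, or by ampleness in part~(i)), so $s$ yields the Serre-type sequence
\[
  0 \longrightarrow \sO_{\bP^2}(d) \longrightarrow E_0 \longrightarrow \sI_Z(3-d) \longrightarrow 0
\]
with $Z$ of length $3 - 3d + d^2$.

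For part~(i), ampleness of $E_0$ forces $d = 1$: if $d \ge 3$ the quotient by $\sO(d)$ is not ample, and if $d = 2$ the restriction of $E_0$ to a line through the unique point of $Z$ acquires a trivial summand. Taking $d = 1$ and $Z = \{p\}$, the extension class lies in $\Ext^1(\sI_p(2), \sO(1))$, which I compute via $0 \to \sI_p \to \sO \to \sO_p \to 0$ and Serre duality to be isomorphic to $\Ext^2(\sO_p, \sO(-1)) \cong \bC$. Since the trivial extension yields a non-locally-free sheaf, the unique non-split class gives a locally free bundle, which (being realized by $T_{\bP^2}$) must be $T_{\bP^2}$ itself.

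For part~(ii), the case $d = 1$ is handled exactly as in part~(i), giving $E_0 \cong T_{\bP^2}$ and making $\bP(E_0)$ homogeneous. For $d \in \{2, 3\}$, $\sO(d)$ is the unique maximally destabilizing subsheaf of $E_0$, and since the numerical hypotheses of Lemma~\ref{lem:almhombun}(\ref{it:suff}) are automatic on $\bP^n$, it suffices to show that $\Aut^0_Z \bP^2$ acts on $\bP^2$ with an open orbit. For $d = 2$, $Z$ is a single point and the stabilizer in $\mathrm{PGL}_3$ is a six-dimensional parabolic, which acts with open orbit $\bP^2 \setminus Z$. The main obstacle is the case $d = 3$ with $Z$ of length~$3$: being the vanishing scheme of a section of a rank-$2$ bundle, $Z$ is a local complete intersection, hence curvilinear, and therefore of one of three types---three distinct reduced points (collinear or not), a simple point together with a length-$2$ point, or a single curvilinear length-$3$ point. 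In each case an explicit coordinate computation exhibits a subgroup of the stabilizer of $Z$ in $\mathrm{PGL}_3$ of dimension at least two whose generic orbit in $\bP^2$ is two-dimensional---for three non-collinear points this subgroup is the two-dimensional diagonal torus in the basis given by the three points, while in the more degenerate configurations a suitable solvable subgroup of $\mathrm{PGL}_3$ plays the same role. Lemma~\ref{lem:almhombun}(\ref{it:suff}) then yields almost homogeneity of $\bP(E_0)$ in each sub-case, completing part~(ii).
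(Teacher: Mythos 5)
Your proof is correct and follows essentially the same route as the paper's: introduce $d := \max\{\ell : H^0(E_0(-\ell))\ne 0\}$, extract the Serre sequence $0 \to \sO_{\bP^2}(d) \to E_0 \to \sI_Z(3-d) \to 0$, rule out $d \ge 2$ for the Fano case by restricting to lines, and for part~(ii) reduce via Lemma~\ref{lem:almhombun}~(\ref{it:suff}) to showing that the stabilizer of the length-$\le 3$ scheme $Z$ acts on $\bP^2$ with an open orbit. The only differences are that you prove the $d=1$ identification $E_0 \cong T_{\bP^2}$ directly by an $\Ext$ computation where the paper cites the classical result, and you spell out the $d=2$ ampleness obstruction and the length-$3$ case analysis that the paper leaves as ``easily verified''.
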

\begin{proof}
  If we let
  \[
    d:= \max \{ \ell \in \bZ \mid H^0(E_0(-\ell)) \ne 0 \},
  \]
  we have an exact sequence
  \[
    0 \longrightarrow \sO_{\bP^n}(d) \longrightarrow E_0 \longrightarrow \sI_Z(3-d) \longrightarrow 0,
  \]
  where $Z \subset \bP^2$ is a $0$-dimensional locally complete intersection subscheme of length
  \[
    \ell(Z) = c_2(E_0(-d)) = c_2(T_{\bP^2}(-d)) = 3 - 3d + d^2.
  \]

  By assumption, we have $d < 4$, and by semicontinuity, $d \ge 1$. If $d = 1$, it is a classic result that $E_0 \cong T_{\bP^2}$ (see for example \cite[§2.3.2]{OSS80}), so we can assume that $d \ge 2$. In this case, the argument from the proof of Theorem~\ref{thm:notalmhom}~(\ref{it:notfano}) applies verbatim to show that $\bP(E_0)$ cannot be Fano. Furthermore, we have $2d > 3 = c_1(E_0)$, so $E_0$ is an unstable bundle. Now if $d = 2$, we have $\ell(Z) = 1$, and if $d = 3$, we get $\ell(Z) = 3$. It is easily verified that for any $0$-dimensional subscheme $Z \subset \bP^2$ with $\ell(Z) \le 3$, the group $\{\varphi\in\Aut^0(\bP^2) \mid \varphi(Z) = Z\}$ acts on~$\bP^2$ with an open orbit. The claim that $\bP(E_0)$ is almost homogeneous now follows from Lemma~\ref{lem:almhombun} (\ref{it:suff}).
\end{proof}
\begin{rem}
  A construction analogous to the one in Corollary~\ref{cor:stromtang} was given in~\cite[§6]{PS14} for $T_{\bP^n}$ for arbitrary~$n \ge 2$. Unfortunately, for $n \ge 3$, the arguments used in the proofs of Theorems \ref{thm:notalmhom} and \ref{thm:fanoalmhom} no longer work, so it is unclear whether one can obtain non-almost homogeneous global deformations of~$\bP(T_{\bP^n})$ for $n \ge 3$.
\end{rem}

\bibliographystyle{amsalpha}
\bibliography{bibtex/references}
\end{document}